\bibliographystyle{plain}
\documentclass[10pt]{amsart}

\usepackage{amssymb}
\usepackage{graphicx}
\usepackage{hyperref}

\usepackage{amsmath,amsthm,amsfonts,float}
\usepackage[usenames, dvipsnames]{color}

\def\N{{\mathbb{N}}}
\def\Z{{\mathbb{Z}}}
\def\R{{\mathbb{R}}}
\def\Q{{\mathbb{Q}}}
\newcommand{\ssm}{\smallsetminus}

\newtheorem{example}{Example}
\newtheorem{corollary}{Corollary}
\newtheorem{prop}{Proposition}
\newtheorem{theorem}{Theorem}
\newtheorem{lemma}{Lemma}

\title{Hierarchy for groups acting on hyperbolic $\Z^n$-spaces}

\author{Andrei-Paul Grecianu}
\address{Department of Mathematical Sciences, Stevens Institute of Technology, 1 Castle Point on Hudson, Hoboken, NJ 07030, USA} 
\email{grecianu@gmail.com}

\author{Alexei Myasnikov}
\address{Department of Mathematical Sciences, Stevens Institute of Technology, 1 Castle Point on Hudson, Hoboken, NJ 07030, USA} 
\email{amiasnikov@gmail.com}

\author{Denis Serbin}
\address{Department of Mathematical Sciences, Stevens Institute of Technology, 1 Castle Point on Hudson, Hoboken, NJ 07030, USA} 
\email{d.e.serbin@gmail.com}

\keywords{Hyperbolic group; hyperbolic space; group action; $\Z^n$-metric space}

\subjclass[2010]{20F65, 20E08, 20F67, 05E18, 53C23}

\date{}

\markright{\protect Hierarchy for groups acting on hyperbolic $\Z^n$-spaces}

\pagestyle{myheadings}

\begin{document}

\begin{abstract}
In \cite{GKMS:2012}, the authors initiated a systematic study of hyperbolic $\Lambda$-metric spaces, where $\Lambda$ is an ordered abelian group, and groups acting on such spaces. The present paper concentrates on the case $\Lambda = \Z^n$ taken with the right lexicographic order and studies the structure of finitely generated groups acting on hyperbolic $\Z^n$-metric spaces. Under certain constraints, the structure of such groups is described in terms of a {\em hierarchy} (see \cite{Wise:2011}) similar to the one established for $\Z^n$-free groups in \cite{KMRS:2012}.
\end{abstract}

\maketitle

\tableofcontents

\section{Introduction}
\label{sec:intro}

In his paper \cite{Lyndon:1963}, Lyndon studied groups, where many standard cancellation techniques from the theory of free groups could be successfully applied. This is how groups with Lyndon length functions were introduced. Then in \cite{Chiswell:1976}, Chiswell showed that a group with a Lyndon length function taking values in $\R$ (or $\Z$) has an isometric action on an $\R$-tree (or $\Z$-tree), providing a construction of the tree on which the group acts. This result was later generalized by Alperin and Bass in \cite{Alperin_Bass:1987} to the case of an arbitrary ordered abelian group $\Lambda$. Hence, one can study groups with abstract Lyndon length functions taking values in $\Lambda$ by considering corresponding actions on $\Lambda$-trees, the objects introduced by Morgan and Shalen in \cite{Morgan_Shalen:1984}. One can think of $\Lambda$-trees as $0$-hyperbolic metric spaces, where the metric takes values not in $\R$, but rather in the ordered abelian group $\Lambda$.

Once the equivalence between $\Lambda$-valued Lyndon length functions and actions on $\Lambda$-trees is established, one can think of possible generalizations. For example what happens if a group acts on a hyperbolic $\Lambda$-metric space? Is there an underlying length function with values in $\Lambda$ in this case? This question was positively answered in \cite{GKMS:2012}, where the authors introduced hyperbolic $\Lambda$-valued length functions and studied many properties of groups that admit such functions.

Now let us concentrate on the case when $\Lambda = \Z^n$ equipped with the right lexicographic order. Our goal is to obtain structural properties of groups with hyperbolic $\Z^n$-valued length functions (that is, acting on hyperbolic $\Z^n$-metric spaces) in terms of free constructions. In this paper we made the first step in this direction: under several natural restrictions on the group and the underlying hyperbolic $\Z^n$-valued length function, we obtained a result similar to the description of groups with free regular Lyndon length functions with values in $\Z^n$ established in \cite{KMRS:2012}. In the future we hope to generalize our results by dropping some of the imposed conditions.

\smallskip

In order to be able to precisely formulate our results below, we need several definitions (see Section \ref{sec:prelim} for details).

Let $\Lambda$ be an ordered abelian group. A {\em $\Lambda$-metric space} is a space defined by the same set of axioms as a usual metric space but with $\R$ replaced by $\Lambda$. Now, if $(X, d)$ is a $\Lambda$-metric space, $v \in X$ and $\delta \in \Lambda$ is positive, then we call $(X, d)$ {\em $\delta$-hyperbolic with respect to $v$} if for all $x, y, z \in X$ 
$$(x \cdot y)_v \geqslant \min\{(x \cdot z)_v, (z \cdot y)_v\} - \delta,$$
where $(x \cdot y)_v$ denotes the Gromov product of $x$ and $y$ with respect to $v$. In fact, hyperbolicity does not depend on the choice of the point $v$, that is, a $\delta$-hyperbolic space with respect to one point is $2\delta$-hyperbolic with respect to any other point. So, we call $(X, d)$ {\em $\delta$-hyperbolic} (or simply {\em hyperbolic}) if it is $\delta$-hyperbolic with respect to every $v \in X$.

Now suppose a group $G$ is acting on a $\Lambda$-metric space $(X, d)$. If we fix $x_0 \in X$ then the action $G \curvearrowright X$ defines a function $l : G \rightarrow \Lambda$ by setting $l(g) = d(x, g x_0)$ and $l$ satisfies the standard axioms of length function (but not a Lyndon length function, in general). If $(X, d)$ is hyperbolic, then the length function $l$ defined above is also called {\em hyperbolic} and it satisfies the set of axioms ($\Lambda 1$)--($\Lambda 4$) (see Section \ref{sec:prelim} for details). Note that one can consider an abstract function $l : G \rightarrow \Lambda$ on $G$ satisfying the axioms ($\Lambda 1$)--($\Lambda 4$), that is, without underlying action on any space. In this case we still call such a function hyperbolic. 

Next, that $l$ is an abstract hyperbolic length function on $G$, we write $a b = a \circ b$ to signify that $l(ab) = l(a) + l(b)$ and we say that the function $l$ is {\em $\delta$-regular} if for any $g, h \in G$ there exist $g_c, h_c, g_d, h_d$ such that 
$$l(g_c) = l(h_c) = c(g,h),\ g = g_c \circ g_d,\ h = h_c \circ h_d,\ {\rm and}\ l(g_c^{-1} h_c) \leqslant 4\delta,$$
where
$$c(g, h) = \frac{1}{2}\left(l(g) + l(h) - l(g^{-1} h)\right).$$
Here we assume that $c(g, h) \in \Lambda$ (and, in general, that $\frac{1}{2} \alpha \in \Lambda$ for every $\alpha \in \Lambda$), since $\Lambda$ can always be replaced by $\Lambda_{\Q} = \Lambda \otimes_{\Z} \Q$, which is a divisible ordered abelian group containing $\Lambda$ as a subgroup. The notion of regularity of length function (for groups acting on $\Lambda$-trees) first appeared in \cite{Promislow:1985} and then was developed in \cite{Myasnikov_Remeslennikov_Serbin:2005}. Regularity of $\Lambda$-valued hyperbolic length functions was studied in \cite{GKMS:2012} (see Section 5.1).

\smallskip

Now consider the case $\Lambda = \Z^n$, where $\Z^n$ has the right lexicographic order. Such an order gives rise to the following notion of {\em height} (see \cite{Khan_Myasnikov_Serbin:2007}): for $a \in \Z^n$ we define the height $ht(a)$ of $a$ to be equal to $k$ if $a = (a_1, \ldots, a_k, 0, \ldots, 0)$ and $a_k \neq 0$. In particular, for $a, b > 0$, the inequality $ht(a) < ht(b)$ implies $a < b$.

Let $G$ be a group and let $l : G \to \Z^n$ be an abstract length function (that is, $l$ satisfies the axioms ($\Lambda 1$)--($\Lambda 4$)). From now on, for $g \in G$ we write $ht(g) = k$ if $ht(l(g)) = k$. Now, for any $k \in [1,n]$ define
$$G_k = \{g \in G \mid ht(g) \leqslant k\}.$$
It is easy to see that for any $f, g \in G$:
\begin{enumerate}
\item $ht(f g) \leqslant \max\{ht(f), ht(g)\}$,
\item $ht(g) = ht(g^{-1})$.
\end{enumerate}
Hence, every $G_k$ is a subgroup of $G$ and
$$1 = G_0 \leqslant G_1 \leqslant \cdots \leqslant G_n = G.$$
This chain of subgroups of $G$ will have a central role in our main result.

We say that the length function $l$ is {\em proper} if for any $k \in \N$ the set $\{g \mid l(g) \leqslant (k, 0, \ldots, 0)\}$ is finite. That is, the set of elements of $G$ whose lengths are bounded by any natural number (considered as a subset of $\Z^n$) is finite if the length function is proper. For example, if $l : G \to \Z$ is the word length function associated with a finite generating set of $G$ then $l$ is obviously proper. But if $l$ is arbitrary and it takes values in $\Z^n$ for $n > 1$, then it may well be the case that there are infinitely many group elements whose lengths are bounded by $k \in \N$.

\smallskip

Now we are ready to formulate the main result of the paper. Recall that a subgroup $H$ of a group $G$ is called {\em isolated} if whenever there exists some $n \in \Z$ such that $g^n \in H$, it follows that $g \in H$.

{\bf Theorem \ref{th:main_result}.} {\em Let $G$ be a finitely generated torsion-free group with a proper $\delta$-hyperbolic $\delta$-regular length function $l : G \rightarrow \Z^n$, where $\Z^n$ has the right lexicographic order. Let $G_k = \{g \in G \mid ht(g) \leqslant k\}$ so that $G$ is the union of the finite chain of subgroups
$$1 = G_0 \leqslant G_1 \leqslant \cdots \leqslant G_n = G.$$ 
If the following conditions are satisfied
\begin{enumerate}
\item[(a)] $ht(\delta) = 1$,
\item[(b)] $l(g) > 0$ for every non-trivial $g \in G$,
\item[(c)] $G_1$ is a finitely generated isolated subgroup of $G$,
\end{enumerate}
then $G_1$ is a word-hyperbolic group and for every $k \in [1, n)$, the group $G_{k+1}$ is isomorphic to an HNN extension of $G_k$ with a finite number of stable letters, where each associated subgroup is isomorphic to either $\Z^m$, or an extension of $\Z^m$ by $\Z$, where $m \leqslant k - 1$.}

\smallskip

Note that the condition that $G$ is finitely generated is required to make sure that $G_1$ is word hyperbolic and it is sufficient in this context. There are, of course, non-finitely generated groups with proper regular actions on hyperbolic spaces, in which case Theorem \ref{th:main_result} may not hold. For example, consider an infinitely generated free group $F = F(X)$, where $X = \{x_i \mid i \in \N\}$, acting on its Cayley graph $\Gamma(F, X)$. If all edges of $\Gamma(F, X)$ have the same length $1$ then the length function arising from the action is regular but not proper. At the same time, if we define the length of every edge labeled by $x_n$ to be $n$ then regularity still holds, but the length function becomes proper. Observe that $F(X)$ cannot be decomposed as shown in Theorem \ref{th:main_result}: $G_1$ in this case coincides with $F(X)$ itself which is not hyperbolic.

\smallskip

Most of the results of this paper also appeared in the first author's Ph.D Thesis (see \cite{Grecianu:2013}).

\medskip

\noindent {\bf Acknowledgment.} The authors would like to thank Olga Kharlampovich and Andrey Nikolaev for insightful discussions. We would also like to thank the referees for their thorough reviews and very useful comments, which helped immensely in improving the clarity and relevance of the paper.

\section{Preliminaries}
\label{sec:prelim}

Here we recall the basic definitions regarding hyperbolic $\Lambda$-metric spaces (see \cite{GKMS:2012} for details).

\subsection{$\Lambda$-hyperbolic spaces and length functions}
\label{subs:hyp_spaces}

Recall that an {\em ordered} abelian group is an abelian group $\Lambda$ (with addition denoted by ``$+$'') equipped with a linear order ``$\leqslant$''  such that for all $\alpha, \beta, \gamma \in \Lambda$ the inequality $\alpha \leqslant \beta$ implies $\alpha + \gamma \leqslant \beta + \gamma$.

Sometimes we would like to be able to divide elements of $\Lambda$ by non-zero integers. To this end we fix a canonical order-preserving embedding of $\Lambda$ into an ordered {\em divisible} abelian group $\Lambda_\Q$  and identify $\Lambda$ with its image in $\Lambda_\Q$. The group $\Lambda_\Q$ is the tensor product $\Q \otimes_\Z \Lambda$ of two abelian groups (viewed as $\Z$-modules) over $\Z$. One can represent elements of $\Lambda_\Q$ by fractions $\tfrac{\lambda}{m}$, where $\lambda \in \Lambda, m \in \Z, m \neq 0$,  and two fractions $\tfrac{\lambda}{m}$ and $\tfrac{\mu}{n}$ are equal if and only if $n \lambda = m \mu$. Addition of fractions is defined as usual, and the embedding is given by the map $\lambda \to \tfrac{\lambda}{1}$. The order on $\Lambda_\Q$ is defined by $\tfrac{\lambda}{m} \geqslant 0 \Longleftrightarrow  m \lambda \geqslant 0 \text{ in } \Lambda$.  Obviously, the embedding $\Lambda \to \Lambda_\Q$ preserves the order. It is easy to see that $\R_\Q = \R$ and $\Z_\Q = \Q$. Furthermore, it is not hard to  show that $(A \oplus B)_\Q \simeq A_\Q \oplus B_\Q$, so $(\R^n)_\Q = \R^n$ and $(\Z^n)_\Q = \Q^n$.

For elements $\alpha, \beta \in \Lambda$ the {\em closed segment} $[\alpha, \beta]$ is defined by
$$[\alpha, \beta] = \{\gamma \in \Lambda \mid \alpha \leqslant \gamma \leqslant \beta \}.$$
Now, a  subset $C \subset \Lambda$ is called {\em convex} if for every $\alpha, \beta \in C$, the set $C$ contains $[\alpha, \beta]$. In particular, a subgroup $C$ of $\Lambda$ is convex if $[0, \beta] \subset C$ for every positive $\beta \in C$.

\smallskip

Let $X$ be a non-empty set and $\Lambda$ an ordered abelian group. A {\em $\Lambda$-metric} on $X$ is a mapping $d: X \times X \rightarrow \Lambda$ such that:
\begin{enumerate}
\item[(LM1)] $\forall\ x,y \in X:\ d(x,y) \geqslant 0$;
\item[(LM2)] $\forall\ x,y \in X:\ d(x,y) = 0 \Leftrightarrow x = y$;
\item[(LM3)] $\forall\ x,y \in X:\ d(x,y) = d(y,x)$;
\item[(LM4)] $\forall\ x,y,z \in X:\ d(x,y) \leqslant d(x,z) + d(y,z)$.
\end{enumerate}

A {\em $\Lambda$-metric space} is a pair $(X,d)$, where $X$ is a non-empty set and $d$ is a $\Lambda$-metric on $X$. If $(X,d)$ and $(X',d')$ are $\Lambda$-metric spaces, an {\em isometry} from $(X,d)$ to $(X',d')$ is a mapping $f: X \rightarrow X'$ such that $d(x,y) = d'(f(x), f(y))$ for all $x, y \in X$. As in the case of usual metric spaces, a {\em segment} in a $\Lambda$-metric space $X$ is the image of an isometry $\alpha: [a,b] \rightarrow X$ for some $a,b \in \Lambda$. In this case  $\alpha(a),  \alpha(b)$ are called the endpoints of the segment. By $[x,y]$ we denote any segment with endpoints $x,y \in X$.

We call a $\Lambda$-metric space $(X,d)$ {\em geodesic} if for all $x,y \in X$, there is a segment in $X$ with endpoints $x,y$.

Let $(X,d)$ be a $\Lambda$-metric space. Fix a point $v \in X$  and for $x,y \in X$ define the Gromov product
$$(x \cdot y)_v = \frac{1}{2} (d(x,v) + d(y,v) - d(x,y)),$$
as an element of $\Lambda_\Q$. Obviously, this is the direct generalization of the standard Gromov product to the $\Lambda$-metric case. Most of its classical properties remain true in the generalization (see \cite[Section 2.3]{GKMS:2012}).

Let $\delta \in \Lambda$ with $\delta \geqslant 0$. Then $(X, d)$ is {\em $\delta$-hyperbolic with respect to $v$} if, for all $x, y, z \in X$
$$(x \cdot y)_v \geqslant \min\{(x \cdot z)_v, (z \cdot y)_v\} - \delta.$$

It was proved in \cite[Lemma 1.2.5]{Chiswell:2001} that if $X$ is $\delta$-hyperbolic with respect to $x$ then it is $2\delta$-hyperbolic with respect to any other point $y \in X$. In view of this result, we call a $\Lambda$-metric space $(X, d)$ {\em $\delta$-hyperbolic} if it is $\delta$-hyperbolic with respect to any point.

One of the crucial examples of $\delta$-hyperbolic $\Lambda$-metric spaces is {\em $\Lambda$-tree} which is exactly a $0$-hyperbolic geodesic $\Lambda$-metric space $X$ such that the Gromov product of any two points $x,y \in X$ with respect to any other point $v \in X$ is always an element of $\Lambda$, that is, $(x\cdot y)_v \in \Lambda$ for all $a, y, v \in X$.

\smallskip

In \cite{Lyndon:1963} Lyndon introduced a notion of an (abstract) {\em length function} $l : G \to \Lambda$ on a group $G$ with values in 
$\Lambda$. Such a function $l$ satisfies the following axioms:
\begin{enumerate}
\item[($\Lambda 1$)] $\forall\ g \in G:\ l(g) \geqslant 0$ and $l(1) = 0$,
\item[($\Lambda 2$)] $\forall\ g \in G:\ l(g) = l(g^{-1})$,
\item[($\Lambda 3$)] $\forall\ g,h \in G:\ l(g h) \leqslant l(g) + l(h)$.
\end{enumerate}

Again, the easiest example of a $\Z$-valued length function on any group $G$ with a generating set $S$ is the word length $|\, \cdot\, |_S : G \to \Z$, where $|g|_S$ is the minimum length of a word $w(S)$ representing the element $g \in G$.

We can introduce another axiom which is a direct generalization of the axiom Lyndon introduced. A length function $l : G \to \Lambda$ is called {\em hyperbolic} if there is $\delta \in \Lambda$ such that

\smallskip

{\noindent ($\Lambda 4, \delta$) $\forall\ f, g, h \in G:\ c(f,g) \geqslant \min\{c(f,h),c(g,h)\} - \delta$,

\smallskip

where $c(g,h) = \frac{1}{2}\left(l(g) + l(h) - l(g^{-1} h)\right)$ is viewed as an element of $\Lambda_\Q$.

If $G$ acts on a $\Lambda$-metric space $(X, d)$ then one can fix a point $v \in X$ and consider a function $l_v : G \to \Lambda$ defined as $l_v(g) = d(v, g v)$, called  a {\em length function based at $v$}. It is not hard to show that $l_v$ satisfies the axioms ($\Lambda 1$)--($\Lambda 3$), that is, it is a length function on $G$ with values in $\Lambda$. Moreover, if $(X, d)$ is $\delta$-hyperbolic for some $\delta \in \Lambda$ with respect to $v$, then $l_v$ is $\delta$-hyperbolic (see 
\cite[Theorem 3.1]{GKMS:2012}).

It is easy to see that a length function $l : G \to \Lambda$ defines a $\Lambda$-pseudometric $d_l $ on $G$ defined by $d_l(g, h) = l(g^{-1} h)$. In the case when $l(g) = 0$ if and only if $g$ is trivial, the pseudometric $d_l$ becomes a $\Lambda$-metric.

\smallskip

Now suppose that $l : G \to \Lambda$ is a $\delta$-hyperbolic length function. We write $a b = a \circ b$ to signify that $l(ab) = l(a) + l(b)$ and we say that the function $l$ is {\em $\delta$-regular (on $G$)} if for any $g, h \in G$ there exist $g_c, h_c, g_d, h_d \in G$ such that 
$$l(g_c) = l(h_c) = c(g,h),\ g = g_c \circ g_d,\ h = h_c \circ h_d,\ {\rm and}\ l(g_c^{-1} h_c) \leqslant 4\delta.$$
Compare this definition with the ones using the properties ($R1, k$), ($R2, k$), and ($R3, k$) introduced in \cite[Section 5.1]{GKMS:2012}.

The regularity property of (Lyndon) length functions was first considered in \cite{Promislow:1985}. Then it was developed in \cite{Myasnikov_Remeslennikov_Serbin:2005} for groups acting freely on
$\Lambda$-trees. It turns out that this property makes many combinatorial arguments based on cancellation between elements very similar to the case of free groups (for example, Nielsen method in free groups). In particular, it is possible to describe groups which have regular free Lyndon length functions in $\Z^n$ (see \cite{KMRS:2012}) and, more generally, in an arbitrary ordered abelian group $\Lambda$ (see \cite{KMS:2011(3)}, \cite{KMS_Survey:2013}). Regularity of $\Lambda$-valued hyperbolic length functions was first introduced in \cite{GKMS:2012}, where basic properties of such functions were studied.

In our case, regularity of a $\delta$-hyperbolic length function on a group $G$ will help us determine the structure of $G$ similarly to the case $\delta = 0$.

\subsection{The case $\Lambda = \Z^n$}
\label{subs:Z^n}

Now consider the case when $\Lambda = \Z^n$, where $\Z^n$ has the right lexicographic order. Recall that if $A$ and $B$ are ordered abelian groups, then the {\em right lexicographic order} on the direct sum $A \oplus B$ is defined as follows:
$$(a_1,b_1) < (a_2,b_2) \Leftrightarrow b_1 < b_2 \ \mbox{or} \ b_1 = b_2 \ \mbox{and} \ a_1 < a_2.$$
One can easily extend this definition to any number of components in the direct sum and apply it in the case of $\Z^n$ which is the direct sum of $n$ copies of $\Z$.

Every element $a \in \Z^n$ can be represented by an $n$-tuple $(a_1, \ldots, a_k, 0, \ldots, 0)$. We say that the {\em height} of $a$ is equal to $k$, and write $ht(a) = k$, if $a = (a_1, \ldots,$ $a_k, 0, \ldots, 0)$ and $a_k \neq 0$. The notion of height can be defined for an arbitrary ordered abelian group $\Lambda$ since $\Lambda$ can be represented as the union of all its convex subgroups and the height of $\alpha \in \Lambda$ is the smallest index of the convex subgroup of $\Lambda$ that $\alpha$ belongs to.

Speaking of convex subgroups of $\Z^n$, each one of them is of the following type
$$\{a \in \Z^n \mid ht(a) \leqslant k\},$$
which is naturally isomorphic to $\Z^k$. Hence, we have a (finite) complete chain of convex subgroups of $\Z^n$:
$$0 < \Z < \Z^2 < \cdots < \Z^n.$$
Now, if $(X, d)$ is a $\Z^n$-metric space and $C \leqslant \Z^n$ is a convex subgroup, then we can consider the subset
$$X_{x, C} = \{y \in X \mid d(x, y) \in C\},$$
which is in turn a $C$-metric space with respect to the metric $d_0 = d|_{X_{x, C}}$. We call $(X_{x, C}, d_0)$ a {\em $C$-metric subspace} of $X$. In view of the complete chain of convex subgroups of $\Z^n$ above, we may have $\Z^k$-subspaces of $X$ for every $k \in [1, n]$. In particular, we can say that a subspace $X_0$ of $X$ {\em has height $k$} if $X_0 = X_{x, C}$ for some $x \in X$ and $C = \Z^k$.

\smallskip

Let $G$ act on a $\Z^n$-metric space $(X, d)$. We say that the action is {\em proper} if for any $x \in X$, the intersection $(G \cdot x) \cap B_\kappa(x)$ is finite for every $\kappa = (a, 0, \ldots, 0)$, where $a \in \N$ (here $G \cdot x$ is the orbit of the point $x \in X$ and $B_\kappa(x)$ is a ball of radius $\kappa$ in $X$ centered at $x$). We say that the action is {\em co-compact} if there exists $\kappa = (a, 0, \ldots, 0)$, where $a \in \N$, such that for every $x, y \in X$ there exist $x' \in G \cdot x,\ y' \in G \cdot y$ with the property $d(x', y') \leqslant \kappa$.

Now, assume that a group $G$ has a $\delta$-hyperbolic length function $l$ with values in $\Z^n$. In many cases we will refer to the height $ht(g)$ of $g \in G$, which is simply the height of its length $ht(l(g))$.

One of the first examples of $\Z^n$-valued hyperbolic length functions is the function $l : G \to \Z$, where $G$ is a word-hyperbolic group with a generating set $S$ and $l = |\, \cdot\, |_S$ is the word length with respect to $S$. There are more sophisticated examples.

\begin{example}
\label{ex:1}
Let $G$ be a group acting on a hyperbolic $\Z$-metric space $X$ and a $\Z$-tree $Y$. Let $x \in X$ and $y \in Y$. 

From the actions of $G$ on $X$ and $Y$ we obtain the length functions $l_X : G \to \Z$ and $l_Y : G \to \Z$ based respectively at $x$ and $y$, that is, defined as $l_X(g) = d(x, g x)$ and $l_Y(g) = d(y, g y)$. Next, we have
$$c_X(g, h) = \frac{1}{2}\left(l_X(g) + l_X(h) - l_X(g^{-1} h)\right)$$
and 
$$c_Y(g, h) = \frac{1}{2}\left(l_Y(g) + l_Y(h) - l_Y(g^{-1} h)\right).$$

Define $l : G \rightarrow \Z^2$ as $l = (l_X, l_Y)$, that is, 
$$l(g) = (d(x, g x), d(y, g y)).$$  
Hence, we have
$$c(g,h) = \frac{1}{2}\left(l(g) + l(h) -l(g^{-1} h)\right) = (c_X(g, h), c_Y(g, h)).$$
This implies that, for any $g, h, k \in G$, we have
$$c(g, k) = \left(c_X(g, k), c_Y(g, k)\right)$$
$$\geqslant \left(\max\{c_X(g, h), c_X(h, k)\} - \delta, \max\{c_Y(g, h), c_Y(h, k)\}\right)$$
$$= \max\{c(g, k), c(h, k)\} - (\delta, 0).$$
This implies that any group that acts both on a simplicial tree, and a hyperbolic $\Z$-metric space, has a $\Z^2$-valued hyperbolic length function. The same construction is valid for $Y$ being a $\Z^n$-tree.
\end{example}

Let $l : G \to \Z^n$ be $\delta$-hyperbolic. We say that the length function $l$ is {\em proper} if for any $k \in \N$ the set $\{g \mid l(g) \leqslant (k, 0, \ldots, 0)\}$ is finite. Observe that if $l$ is a based length function coming from a proper action of $G$ on some $\Z^n$-metric space $X$, then $l$ itself is proper. A proper length function corresponds to an action that is properly discontinuous on subspaces of height $1$, hence the term.


\begin{example}
\label{ex:2}
Let a finitely generated group $G$ act properly and co-compactly on a geodesic $\Z^n$-hyperbolic space $X$ so that the stabilizer of some $\Z$-subspace $X_0$ is finitely generated.

Since $G$ acts co-compactly, $X / G$ has finite diameter of minimal height, which means that there exists $a > 0$ such that for any $x,y \in X$, there exists $u \in G$ with the property $u x \in B_\kappa(y)$, where $\kappa = (a, 0, \ldots, 0)$. This naturally implies that the action of $G$ satisfies the condition ($RA, k$) (see \cite[Section 5.1]{GKMS:2012}) for a natural number $k$ such that $a < k \delta$, since for every $g, h \in G$, there is always a mid-point $y$ of the geodesic triangle $\{x, gx, hx\}$ and there exists some $u \in G$ such that $u x \in B_\kappa(y)$.

Take then $x_0 \in X_0$ and define $l(g) = d(x_0, g x_0)$. Note that $l$ is hyperbolic since $X$ is hyperbolic. Moreover, it is regular which follows from the fact that the action satisfies the condition ($RA, k$) (see \cite[Lemma 5.3]{GKMS:2012}).

It follows that natural conditions (proper and co-compact action) on a geodesic $\Z^n$-hyperbolic space ensure existence of a regular and proper $\Z^n$-valued length function.
\end{example}

Finally, whenever we deal with a $\Z^n$-tree, which is a $0$-hyperbolic $\Z^n$-metric space, we are going to use the following notation. Let $(\Gamma, d)$ be a $\Z^n$-tree with a metric $d$. The set of points (or vertices) of $\Gamma$ we denote by $V(\Gamma)$, then for every $v_0, v_1 \in V(\Gamma)$ such that $d(v_0, v_1) = 1$, we call the ordered pair $(v_0, v_1)$ the {\em edge} from $v_0$ to $v_1$. Here, if $e = (v_0, v_1)$ then denote $v_0 = o(e),\ v_1 = t(e)$ which are respectively the {\em origin} and {\em terminus} of $e$. Denote by $E(\Gamma)$ the set of edges in $\Gamma$. Next, we define the {\em inversion} function $^{-1} : E(\Gamma) \to E(\Gamma)$: for an edge $e \in E(\Gamma)$ the endpoints of the {\em inverse} $e^{-1}$ of $e$ we define as $o(e^{-1}) = t(e),\ t(e^{-1}) = o(e)$, that is, the direction of $e^{-1}$ is reversed with respect to the direction of $e$. It is easy to see that we have $(e^{-1})^{-1} = e$ and since $\Gamma$ has no loops, $e \neq e^{-1}$ for any $e \in E(\Gamma)$. 

The notation above is quite standard and we are going to use it throughout the paper.

\section{The structure of groups with $\Z^n$-valued hyperbolic length functions}
\label{sec:structure}

In this section we prove the main result of the paper.

\subsection{Auxiliary result}
\label{subs:main_theorem}

In order to prove Theorem \ref{th:main_result} we need an auxiliary result with a similar statement, that we are going to formulate in the end of this subsection. Throughout this subsection we assume that $G$ is a torsion-free group with a $\delta$-hyperbolic $\delta$-regular length function $l : G \rightarrow \Z^n$, where $\Z^n$ has the right lexicographic order. Moreover, we assume that the following conditions are satisfied:
\begin{enumerate}
\item[(a)] $ht(\delta) = 1$,
\item[(b)] $l(g) > 0$ for every non-trivial $g \in G$,
\item[(c)] $G_1$ is a finitely generated isolated subgroup of $G$.
\end{enumerate}

Recall that $l(g) = (a_1, \ldots, a_n)$ for every $g \in G$. Hence, for every $k \in [0, n)$, let $l_k(g)$ be the restriction of $l$ to the $n - k$ rightmost coordinates, that is, $l_k(g) = (a_{k + 1}, \ldots, a_n)$ (in particular, $l_0 = l$). In other words, $l_k = \pi_k \circ l$, where $\pi_k$ is the projection of $n$-tuple on the $n - k$ rightmost coordinates, which is an order-preserving homomorphism of $\Z^n$ onto $\Z^{n - k}$. Note that $l_k$ is a $\Z^{n - k}$-valued length function on $G$ (the axioms ($\Lambda 1$)--($\Lambda 3$) are trivially satisfied). Observe also that if $X$ is a hyperbolic $\Z^n$-metric space corresponding to the length function $l$ (see \cite[Theorem 3.3]{GKMS:2012}), then $l_k$ corresponds to the action of $G$ on the $\Z^{n - k}$-metric space $Y$ obtained from $X$ by collapsing all $\Z^k$-subspaces of $X$ into points.
Similar to the notation
$$c(g, h) = \frac{1}{2}\left(l(g) + l(h) - l(g^{-1} h)\right),$$
for every $f, g \in G$ we define
$$c_k(g, h) = \frac{1}{2}\left(l_k(g) + l_k(h) - l_k(g^{-1} h)\right).$$
Next, for every $m \in [0, n]$ define $G_m = \{g \in G \mid ht(g) \leqslant m\}$. It is easy to see that $G_m \leqslant G_{m+1}$ and we have the following chain of subgroups of $G$
$$1 = G_0 \leqslant G_1 \leqslant \cdots \leqslant G_n = G,$$
which corresponds to the complete chain of convex subgroups of $\Z^n$. Since $G_m$ is a subgroup of $G$, the restriction of $l_k$ on $G_m$ is a length function for every $k \in [0, n)$ (in particular, $l_k : G_m \to \Z^{n-k}$ is a zero function if $m \leqslant k$). We list more specific properties of $l_k$ in the lemma below.

\begin{lemma}
\label{lem:0-hyp}
For any $k \in (0, n)$, the length function $l_k : G \to \Z^{n - k}$ satisfies the following conditions:
\begin{itemize}
\item $l_k$ is $0$-hyperbolic,
\item $l_k$ is a Lyndon length function on $G$,
\item for every $m \in [0, n]$, the restriction of $l_k$ on $G_m$ is $0$-regular on $G_m$, that is, for every $g, h \in G_m$ there exist $g_c, h_c, g_d, h_d \in G_m$ such that 
$$l_k(g_c) = l_k(h_c) = c_k(g,h),\ g = g_c \circ g_d,\ h = h_c \circ h_d,\ {\rm and}\ l_k(g_c^{-1} h_c) = 0.$$
\end{itemize}
\end{lemma}
\begin{proof}
Notice that $c_k = \pi_k \circ c$, where $\pi_k$ is the projection of $n$-tuple on the $n - k$ rightmost coordinates. Since $\pi_k$ is an order-preserving homomorphism of $\Z^n$ onto $\Z^{n-k}$, from the inequality
$$c(g, h) \geqslant \min\{c(g, f), c(h, f)\} - \delta$$ 
for any $f,\ g$, and $h$, we obtain that 
$$c_k(g, h) = \pi_k(c(g, h)) \geqslant \min\{\pi_k(c(g, f)),\pi_k(c(h, f))\} - \pi_k(\delta)$$
$$= \min\{c_k(g, f), c_k(h, f)\},$$
since $\pi_k(\delta) = 0$ for any $k > 0$ (it follows from the assumption (a) that $ht(\delta) = 1$). Hence, $l_k$ is $0$-hyperbolic, which implies that $l_k$ is a Lyndon length function on $G$.

\smallskip

Finally, we show that the restriction of $l_k$ on $G_m$ is $0$-regular on $G_m$. By assumption, $l$ is $\delta$-regular on $G$, that is, for any $g, h \in G$ there exist $g_c, h_c, g_d, h_d \in G$ such that $g = g_c g_d,\ h = h_c h_d$ and
$$l(g) = l(g_c) + l(g_d),\ \ \ l(h) = l(h_c) + l(h_d),$$
$$l(g_c) = l(h_c) = c(g,h),\ \ \ l(g_c^{-1} h_c) \leqslant 4\delta.$$
After $\pi_k$ is applied, we obtain
$$l_k(g) = l_k(g_c) + l_k(g_d),\ \ \ l_k(h) = l_k(h_c) + l_k(h_d),$$
$$l_k(g_c) = l_k(h_c) = c_k(g,h),\ \ \ l_k(g_c^{-1} h_c) = 0,$$
and it is only left to show that if $g, h \in G_m$, then $g_c, h_c, g_d, h_d \in G_m$. But it is easy to see that for $f_1, f_2 \in G$, if $l(f_1 f_2) = l(f_1) + l(f_2)$, then $ht(f_1 f_2) = \max\{ ht(f_1), ht(f_2) \}$. Thus, from $f_1 f_2 \in G_m$ we get  $ht(f_1 f_2) \leqslant m$, which implies that $ht(f_1), ht(f_2) \leqslant m$. Hence, $g, h \in G_m$ implies that $g_c, h_c, g_d, h_d \in G_m$.
\end{proof}

By Lemma \ref{lem:0-hyp}, for every $k \in (0, n)$ and $m \in (k, n]$, the function $l_k : G_m \to \Z^{m - k}$ is a Lyndon length function on $G_m$, hence, by \cite[Theorem 4.6]{Chiswell:2001}, there exists a $\Z^{m - k}$-tree $(\Gamma^m_k, d^m_k)$, an action of $G_m$ on $\Gamma^m_k$, and a point $x^m_k \in \Gamma^m_k$ such that $l_k(g) = d^m_k(x^m_k, g x^m_k)$ for every $g \in G_m$. 

Let us fix $k \in (0, n)$ and $m \in (k, n]$. Let us label some vertices of $\Gamma^m_k$ as follows. Observe that for $f \in G_m$ we have $l_k(f) = 0$ if and only if $f \in G_k$. Indeed, since $ht(f) \leqslant m$, we have $l(f) = (a_1, \ldots, a_k, \ldots, a_m, 0, \ldots, 0)$. Now, $l_k(f) = 0$ (that is, $l_k(f)$ is an $(m - k)$-tuple of zeros) is equivalent to $a_{k+1} = \cdots = a_m = 0$, which is equivalent to $f \in G_k$. It follows that $f x^m_k = x^m_k$ for every $f \in G_k$ and $g x^m_k = h x^m_k$ if $g, h \in G_m$ and $g G_k = h G_k$. Hence, for every $g \in G_m$, label the vertex $g x^m_k$ by the left coset $g G_k$ of $g$. In this case, the action of $G_m$ on labeled vertices of $\Gamma^m_k$ is equivalent to the action of $G_m$ on the set of left cosets of $G_k$ in $G_m$ by left multiplication: $h (g x^m_k) = (h g) x^m_k$ if and only if $h (g G_k) = (h g) G_k$.

Notice that by Lemma \ref{lem:0-hyp}, $l_k$ is $0$-regular on $G_m$, that is, for every $g, h \in G_m$ there exist $g_c, h_c, g_d, h_d \in G_m$ such that 
$$l_k(g_c) = l_k(h_c) = c_k(g, h),\ g = g_c \circ g_d,\ h = h_c \circ h_d,\ {\rm and}\ l_k(g_c^{-1} h_c) = 0.$$
It follows that $g_c^{-1} h_c \in G_k$ and $g_c x^m_k = h_c x^m_k$. Notice that the vertex $g_c x^m_k$ labeled by the coset $g_c G_k$ is a branch point, which follows from the equality $l_k(g_c) = l_k(h_c) = c_k(g, h)$. Moreover, every branch point of $\Gamma^m_k$ is labeled by some coset of $G_k$ in $G_m$. Indeed, if $x$ is a branch point, then there exist $g, h \in G_m$ such that $x = Y(x^m_k, g x^m_k, h x^m_k)$ (that is, $[x^m_k, x] = [x^m_k, g x^m_k] \cap [x^m_k, h x^m_k]$). In this case, from $0$-regularity of $l_k$ it follows that there exists $u \in G_m$ such that $x = u x^m_k$ and $x$ is labeled by $u G_k$. In particular, it follows that the degree of every non-labeled vertex of $\Gamma^m_k$ is at most $2$.

\smallskip

Now consider the $\Z^{n-1}$-tree $\Gamma_1^n$, which $G = G_n$ acts upon with the underlying length function $l_1 : G \to \Z^{n - 1}$. Denote the metric on $\Gamma_1^n$ by $d_1$.

Recall (see \cite[Section 3.1]{Chiswell:2001}) that the {\em axis} (or the {\em characteristic set}) of $g \in G$ is the subset $Axis(g)$ of $\Gamma_1^n$ defined by
$$Axis(g) = \{ p \in \Gamma_1^n \mid [g^{-1} p, p] \cap [p, g p] = \{p\} \}.$$
If $g$ acts on $\Gamma_1^n$ as an inversion (that is, $g^2$ has a fixed point, but $g$ does not), then $Axis(g) = \varnothing$. If $g$ acts either elliptically, or hyperbolically, then $Axis(g)$ is a closed non-empty $\langle g \rangle$-invariant subtree of $\Gamma_1^n$ and $Axis(g) = \{ p \in \Gamma_1^n \mid d_1(p, g p) = \ell(g) \}$, where $\ell(g) = \min\{ d_1(x, gx) \mid x \in \Gamma_1^n \}$ is the {\em translation length} of $g$ (see \cite[Corollary 3.1.5]{Chiswell:2001}). Notice that if $g$ acts elliptically, then $\ell(g) = 0$ and if $g$ acts hyperbolically, then $\ell(g) > 0$, in which case for every $m \neq 0$ we have $Axis(g^m) = Axis(g)$ and $\ell(g^m) = |m| \ell(g)$ (see \cite[Corollary 3.1.7]{Chiswell:2001}). 

There is a connection between $l_1(g)$ and $\ell(g)$. If $g$ acts elliptically, then $l_1(g) = 2d_1(x_1^n, Axis(g))$ (see \cite[Lemma 3.1.1]{Chiswell:2001}). In this case, for every $m \neq 0$ we have $l_1(g^m) = 2d_1(x_1^n, Axis(g^m)) \leqslant 2d_1(x_1^n, Axis(g)) = l_1(g)$ since $Axis(g) \subseteq Axis(g^m)$. If $g$ acts hyperbolically, then  $l_1(g) = \ell(g) + 2d_1(x_1^n, Axis(g))$ (see \cite[Theorem 3.1.4]{Chiswell:2001}). It follows that for every $m \neq 0$ we have 
$$l_1(g^m) = \ell(g^m) + 2d_1(x_1^n, Axis(g^m)) = |m| \ell(g) + 2d_1(x_1^n, Axis(g))$$
$$> \ell(g) + 2d_1(x_1^n, Axis(g)) = l_1(g)$$
since $Axis(g^m) = Axis(g)$.

\smallskip

Next, we show that the condition (c) given above (and in the statement of Proposition \ref{pr:main_technical}, that follows in the end of the section and which is the main result of the section) is equivalent to a more technical condition, which will be easier to use later.

\begin{lemma}
\label{le:conditions_1}
Let $G$ be a torsion-free group with a $\delta$-hyperbolic $\delta$-regular length function $l : G \rightarrow \Z^n$. If the conditions (a) and (b) hold, then the condition (c) is equivalent to the following one:
\begin{enumerate}
\item[(c$'$)] for any $g \in G$ and $m \neq 0$, if $l(g^m) < l(g)$, then $ht(l(g) - l(g^m)) = 1$.
\end{enumerate}
Moreover, the conditions (a), (b), and (c) imply that any $g \in G$ acts on $\Gamma_1^n$ either elliptically, or hyperbolically, but not as an inversion.
\end{lemma}
\begin{proof}
$(c')\ \Longrightarrow\ (c)$: suppose $g \in G$ is such that $g^m \in G_1$ for some $m \neq 0$. If $l(g^m) \geqslant l(g)$, then $ht(l(g)) = 1$ and $g \in G_1$. Suppose that $l(g^m) < l(g)$. From (c$'$) it follows that $ht(l(g) - l(g^m)) = 1$. Since $ht(l(g^m)) = 1$, we obtain that $ht(l(g)) = 1$ and $g \in G_1$.

\smallskip

$(c)\ \Longrightarrow\ (c')$: let $g \in G$ and consider the action of $g$ on $\Gamma_1^n$.

If $g$ acts hyperbolically on $\Gamma_1^n$, then $l_1(g^m) > l_1(g)$ for every $m \neq 0$, which implies that $l(g^m) > l(g)$. Hence, (c$'$) is not applicable here.

If $g$ acts elliptically, then for every $m \neq 0$ we have $l_1(g^m) \leqslant l_1(g)$. If $Axis(g) = Axis(g^m)$, then 
$l_1(g) = l_1(g^m)$, which implies $l(g) = l(g^m) + \varepsilon$, where $ht(\varepsilon) = 1$, hence, $ht(l(g) - l(g^m)) = 1$. In particular, (c$'$) holds if $l(g^m) < l(g)$ for some $m$. Now suppose the fixed point set $Axis(g)$ of $g$ is properly contained in the fixed point set $Axis(g^m)$ of $g^m$, that is, there exists $v \in Axis(g^m)$ such that $g^m v = v$ but $g v \neq v$. Recall that some vertices of $\Gamma_1^n$ are labeled by cosets of $G_1$ in $G$ and non-labeled vertices have degree at most $2$. If $v$ is labeled, say by $a G_1$ for some $a \in G$, then $(g a) G_1 \neq a G_1$, but $(g^m a) G_1 = a G_1$, which implies that $(a^{-1} g a) G_1 \neq G_1$, but $(a^{-1} g^m a) G_1 = G_1$. Thus, $a^{-1} g a \notin G_1$, but $(a^{-1} g a)^m \in G_1$, which is a contradiction with (c). Hence, $v$ is not labeled. But in this case $v$ belongs to a geodesic segment between two labeled vertices, say $[a G_1, b G_1]$ for some $a, b \in G$. Since $g^m$ fixes $v$ and $g$ does not, $g^{2m}$ fixes $[a G_1, b G_1]$ and $g$ does not. In particular, $g^{2m}$ fixes the labeled vertex $a G_1$ which is not fixed by $g$. Repeating the argument above we obtain that $a^{-1} g a \notin G_1$, but $(a^{-1} g a)^{2m} \in G_1$, which is a contradiction again. Hence, the assumption that $Axis(g) \subsetneq Axis(g^m)$ leads to a contradiction.

Finally, suppose $g$ acts as an inversion. Then by definition there exists a vertex $v$ in $\Gamma_1^n$ such that $g^2 v = v$ and $g v \neq v$. Repeating the argument for the elliptic case we obtain a vertex in $\Gamma_1^n$ labeled by $a G_1$, which is fixed by $g^2$, but not by $g$, and further have $a^{-1} g a \notin G_1$, but $(a^{-1} g a)^2 \in G_1$, which is a contradiction with (c).

It follows that $l(g^m) < l(g)$ can occur only when $g \in G$ acts elliptically and in this case we have $ht(l(g) - l(g^m)) = 1$, that is, (c$'$) follows from (c).
\end{proof}

Now, using the above lemma, we can formulate another property of the length function $l_k : G \to \Z^{n - k}$, in addition to those listed in Lemma \ref{lem:0-hyp}.

\begin{corollary}
\label{cor:0-hyp}
Let $G$ be a torsion-free group with a $\delta$-hyperbolic $\delta$-regular length function $l : G \rightarrow \Z^n$. If the conditions (a), (b), and (c) hold, then for any $k \in (0, n)$, the length function $l_k : G \to \Z^{n - k}$ satisfies the property that $l_k(g^2) \geqslant l_k(g)$ for any $g \in G$.
\end{corollary}
\begin{proof}
From Lemma \ref{le:conditions_1} it follows that the conditions (a), (b), and (c) imply (c$'$). Hence, for any $g \in G$, either $l(g^2) \geqslant l(g)$, or $ht(l(g) - l(g^2)) = 1$. Since $l_k = \pi_k \circ l$ and $\pi_k$ is an order-preserving homomorphism of $\Z^n$ onto $\Z^{n-k}$, from $l(g^2) \geqslant l(g)$ we obtain $l_k(g^2) \geqslant l_k(g)$, and from $ht(l(g) - l(g^2)) = 1$ we obtain $l_k(g) - l_k(g^2) = 0$. Hence, $l_k(g^2) \geqslant l_k(g)$ for any $g \in G$.
\end{proof}

Next, let $k \in (0, n)$ and $m = k + 1$. In this case we have a $\Z$-tree $\Gamma^{k+1}_k$ equipped with an action of $G_{k+1}$ corresponding to the Lyndon length function $l_k : G_{k+1} \to \Z$. Without loss of generality we can assume that $\Gamma^{k+1}_k$ is minimal with respect to the underlying action of $G_{k+1}$. In particular, it implies that $\Gamma^{k+1}_k$ is spanned by the orbit of the basepoint $x^{k+1}_k$, that is
$$\Gamma^{k+1}_k \subseteq \bigcup_{g \in G_{k+1}} [x^{k+1}_k, g x^{k+1}_k].$$
Recall that some vertices of $\Gamma^{k+1}_k$ are labeled by left cosets of $G_k$ in $G_{k+1}$. We are going to modify $\Gamma^{k+1}_k$ so that all vertices of the new $\Z$-tree are labeled by left cosets of $G_k$ in $G_{k+1}$ - this is important for our further investigations.

\begin{lemma}
\label{lem:tree_acted_on}
For any $k \in (0, n)$ there exists a $\Z$-tree $\Gamma_k$, whose vertices are labeled by the cosets of $G_k$ in $G_{k+1}$, upon which $G_{k+1}$ acts without inversions by left multiplication.
\end{lemma}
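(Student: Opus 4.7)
The strategy is to apply Chiswell's construction (see \cite[Chapter 2]{Chiswell:2001}) to the restriction of $l_k$ to the subgroup $G_{k+1}$. First I would observe that for every $g \in G_{k+1}$ we have $ht(l(g)) \leqslant k+1$, so $l_k(g) = \pi_k(l(g))$ is of the form $(a, 0, \ldots, 0) \in \Z^{n-k}$, and hence $l_k|_{G_{k+1}}$ may be identified with a $\Z$-valued length function. By Lemma \ref{lem:0-hyp} this function is $0$-hyperbolic, $0$-regular, and satisfies $l_k(g^2) \geqslant l_k(g)$ for every $g \in G_{k+1}$.

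Next, I would invoke Chiswell's theorem to obtain a $\Z$-tree $\Gamma$ together with a basepoint $v_0$ on which $G_{k+1}$ acts by isometries in such a way that $l_k(g) = d(v_0, g v_0)$. The stabilizer of $v_0$ is exactly $\{g \in G_{k+1} \mid l_k(g) = 0\} = G_k$, so the orbit $G_{k+1} \cdot v_0$ is in canonical bijection with the set of left cosets $G_{k+1}/G_k$, yielding the desired labeling. The $0$-regularity of $l_k$ is what ensures that every lattice point along any segment $[v_0, g v_0]$ lies in the orbit of $v_0$; since $\Gamma$ is a $\Z$-tree (hence simplicial with unit edge lengths), this means the full vertex set of $\Gamma$ coincides with $G_{k+1} \cdot v_0$.

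Finally I would verify that the action is without inversions. Suppose, for contradiction, that some $g \in G_{k+1}$ inverts an edge $e$ with endpoints $u$ and $v$. Since both vertices lie in the orbit of $v_0$, we may write $u = h v_0$ for some $h \in G_{k+1}$, and the conjugate $g' = h^{-1} g h$ then inverts the edge with endpoints $v_0$ and $h^{-1} v$. In particular $g' v_0$ is adjacent to $v_0$, so $l_k(g') = 1$, while $(g')^2 v_0 = v_0$ gives $l_k((g')^2) = 0$. This contradicts $l_k((g')^2) \geqslant l_k(g')$ from Lemma \ref{lem:0-hyp}, so no such inversion exists.

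The main obstacle I anticipate is the intermediate step of showing that every vertex of $\Gamma$ lies in the orbit of $v_0$. The exhaustive coset labeling and the conjugation trick in the no-inversion argument both rest on this fact, and it is precisely where $0$-regularity plays an essential role: regularity forces every common "prefix" of two elements (of integer length in $\Z$) to be realised by an honest group element, which in tree language says that every integer point along a segment $[v_0, g v_0]$ lies in $G_{k+1} \cdot v_0$. Once this is in place, the tree structure, the labeling by cosets, and the absence of inversions follow directly from the axioms and from Lemma \ref{lem:0-hyp}.
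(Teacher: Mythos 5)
There is a real gap in the step you yourself flag as the crux: the claim that $0$-regularity forces every integer point of every segment $[v_0, g v_0]$ in the Chiswell tree to lie in the orbit $G_{k+1}\cdot v_0$, so that the vertex set of the Chiswell tree is exactly the coset set $G_{k+1}/G_k$. Regularity is a statement about \emph{pairs} of elements: for $g,h$ it produces elements $g_c\sim h_c$ realizing the Gromov product $c_k(g,h)$, i.e.\ it guarantees that \emph{branch points} (medians of triples of orbit points) are orbit points. It says nothing about the other integer points of a segment $[v_0,gv_0]$, and these need not be realized by group elements; nothing in the hypotheses prevents, say, every positive value of $l_k$ on $G_{k+1}$ from being at least $2$ (already for $G=\Z$ with $l(m)=2|m|$, a $0$-regular, $0$-hyperbolic length function, the Chiswell tree is a line whose odd vertices are not in the orbit). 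The paper itself takes the opposite view: its construction explicitly keeps track of ``unlabeled'' vertices of the Chiswell tree $\Gamma'$, uses regularity only to place the branch points at labeled (coset) vertices, and then \emph{defines} $\Gamma$ as a new tree with $V(\Gamma)=G_{k+1}/G_k$ and with $gG_k,hG_k$ adjacent exactly when the $\Gamma'$-geodesic between them contains no other labeled vertex. This collapsing step is what your proof is missing; without it your $\Gamma$ is not the tree the lemma asks for, and your appeal to ``unit edge lengths'' to identify vertices with cosets does not go through.

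The good news is that the rest of your argument survives the repair, and is in fact cleaner than the paper's. Once $\Gamma$ is built as above (vertices are cosets), your conjugation trick needs neither unit edge lengths nor the full-orbit claim: if $g$ swaps adjacent cosets $h_1G_k\leftrightarrow h_2G_k$, set $g'=h_1^{-1}gh_1$; then $g'G_k=h_1^{-1}h_2G_k\neq G_k$ gives $l_k(g')>0$, while $g^2h_1G_k=h_1G_k$ gives $(g')^2\in G_k$, i.e.\ $l_k((g')^2)=0$, contradicting the monotonicity $l_k(x^2)\geqslant l_k(x)$ of Lemma \ref{lem:0-hyp}. By contrast, the paper rules out inversions by a geodesic-decomposition argument inside $\Gamma'$ involving the unlabeled midpoint, reaching the same contradiction $l_k(g^2)<l_k(g)$. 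So: keep your Chiswell step and your no-inversion argument, but replace the ``all vertices are orbit points'' assertion by the paper's definition of $\Gamma$ via collapsing, and check (as the paper does) that branch points are labeled so that this collapsed object is again a tree on which left multiplication acts.
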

\begin{proof}
Recall that the $d^{k+1}_k : \Gamma^{k+1}_k \to \Z$ is the distance function on the $\Z$-tree $\Gamma^{k+1}_k$. The base vertex of $\Gamma^{k+1}_k$ is $x^{k+1}_k$. Notice that $x^{k+1}_k \in \Gamma^{k+1}_k$ is labeled by the coset $G_k$.

We define $\Gamma_k$ as follows. The idea is to take $\Gamma^{k+1}_k$ and remove from it all vertices not labeled by cosets of $G_k$ in $G_{k+1}$. Since every non-labeled vertex of $\Gamma^{k+1}_k$ is not a branch point, the structure of $\Gamma_k$ remains essentially the same. More formally, we define the set of vertices $V(\Gamma_k)$ of $\Gamma_k$ to be the set of left cosets of $G_k$ in $G_{k+1}$, and the pair $(g G_k, h G_k)$ is an edge $e \in E(\Gamma_k)$ if and only if the geodesic $[x, y]$ in $\Gamma^{k+1}_k$, where $x$ is labeled by $g G_k$ and $y$ is labeled by $h G_k$, contains no labeled vertices except the endpoints $x$ and $y$. We can consider a natural map $\phi : \Gamma_k \to \Gamma^{k+1}_k$ defined as follows: for every $g G_k \in V(\Gamma_k)$ we set $\phi(g G_k)$ to be the unique vertex of $\Gamma^{k+1}_k$ labeled by $g G_k$, and for every $e = (g G_k, h G_k) \in E(\Gamma_k)$ we set $\phi(e) = [x, y]$, where $x = \phi(g G_k),\ y = \phi(h G_k)$. 

From the definition above we can immediately obtain some basic properties of $\Gamma_k$ and $\phi$. Suppose $v \in V(\Gamma^{k+1}_k)$. If $v$ is labeled by a coset, then, by definition, it has a unique preimage $\phi(v) \in V(\Gamma_k)$. If $v$ is not labeled, then $v$ cannot be a branch point, so from the assumption that $\Gamma^{k+1}_k$ is minimal it follows that there exists a geodesic path $p_v$ in $\Gamma^{k+1}_k$, unique up to switching end-points, that contains $v$ and such that only the initial and terminal vertices of $p_v$ are labeled by cosets. In this case, $p_v$ is the image of some edge in $\Gamma_k$.
Now, if $e_1, e_2 \in \Gamma^{k+1}_k$ share a labeled vertex $v \in V(\Gamma^{k+1}_k)$ and $e_1 \neq e_2,\ e_1 \neq e_2^{-1}$ then there exist edges $f_1, f_2 \in E(\Gamma_k)$ which share $\phi^{-1}(v)$ and such that $f_1 \neq f_2,\ f_1 \neq f_2^{-1}$. In particular, the preimage of a branch point in $\Gamma^{k+1}_k$ is a branch point in $\Gamma_k$. Also, it is easy to see that if $f = \phi(e)$, then $f^{-1} = \phi(e^{-1})$.

First of all, we show that $\Gamma_k$ is a $\Z$-tree. 

Let $a G_k, b G_k \in V(\Gamma_k)$. Since $\Gamma^{k+1}_k$ is a a geodesic metric space, there is a path $p$ in $\Gamma^{k+1}_k$ connecting $\phi(a G_k)$ and $\phi(b G_k)$. This path can be split into a unique sequence of subpaths $p_1, p_2, \ldots, p_m$ so that $t(p_i) = o(p_{i+1})$ for every $i \in [1, m - 1]$, and only the initial and terminal vertices of each $p_i$ are labeled by cosets. Now, if $o(p_i) = a_i G_k$ and $t(p_i) = b_i G_k$, then there exists an edge $q_i = (a_i G_k, b_i G_k) \in E(\Gamma_k)$ such that $\phi(q_i) = p_i$. Hence, the path $q_1 \ldots q_m$ connects $a G_k$ and $b G_k$ in $\Gamma_k$. In other words,  $\Gamma_k$ is a geodesic space.

Suppose there is a circuit $\mathcal{C}$ in $\Gamma_k$, which is a closed path $e_1 e_2 \ldots e_r$. Without loss of generality, we can assume that $r$ is the minimum possible length of a circuit in $\Gamma_k$, in particular, $\mathcal{C}$ has no self-intersections and no cyclic permutation of $\mathcal{C}$ has backtracking. The image of $\mathcal{C}$ under the map $\phi$ is a circuit $\mathcal{C}' = f_1 f_2 \ldots f_r$ in $\Gamma^{k+1}_k$, where each $f_i = \phi(e_i)$ is a geodesic path in $\Gamma^{k+1}_k$ such that only the initial and terminal vertices of each $f_i$ are labeled by cosets. Since $\Gamma^{k+1}_k$ is a $\Z$-tree, $\mathcal{C}'$ tracks a finite subtree $T$ of $\Gamma^{k+1}_k$. Hence, there exists a vertex $x \in T$ which has valence $1$ in $T$. Notice that $x$ is labeled by a coset: indeed, if $x$ is not labeled, then it belongs to some $f_i$ and since $f_i$ is a geodesic path, $x$ cannot have valence $1$ in $T$. Now, since $\mathcal{C}'$ is a closed path, we have $x = t(f_i) = o(f_{i+1})$ for some $i \in [1, r - 1]$ (or, $x = t(f_r) = o(f_1)$). But then, from the fact that $T$ is a tree, it follows that $f_{i+1} = f_i^{-1}$, and it implies that $e_{i+1} = e_i^{-1}$, that is, $\mathcal{C}$ has backtracking, which is a contradiction with our assumption.

So, we can conclude that $\Gamma_k$ is a $\Z$-tree. Let us denote by $d_k$ the $\Z$-metric on $\Gamma_k$.

Since $G_{k+1}$ acts on the set of left cosets of $G_k$ in $G_{k+1}$ by left multiplication, $G_{k+1}$ naturally acts on the set of vertices of $\Gamma_k$, which induces the action of $G_{k+1}$ on $\Gamma_k$. This action corresponds to the action of $G_{k+1}$ on the set of labeled vertices of $\Gamma^{k+1}_k$: if $a G_k, b G_k \in V(\Gamma_k)$ and $x = \phi(a G_k),\ y = \phi(b G_k)$, then for any $g \in G_{k+1}$ we have $d^{k+1}_k(g x, g y) = d^{k+1}_k(x, y)$, which implies that $d_k((g a) G_k, (g b) G_k) = d_k(a G_k, b G_k)$. That is the action of $G_{k+1}$ on $\Gamma_k$ is isometric.

It remains to prove that $G_{k+1}$ acts on $\Gamma_k$ without inversions. Suppose, on the contrary, that there exists $g \in G_{k+1}$ and an edge $e = (h_1 G_k, h_2 G_k) \in E(\Gamma_k)$ such that $g e = e^{-1}$, that is, 
$g (h_1 G_k) = h_2 G_k$ and $g (h_2 G_k) = h_1 G_k$. Notice that $g^2$ fixes $e$.

Since $\Gamma_k$ is a tree, either $h_1 G_k$ belongs to the geodesic $[G_k, h_2 G_k]$, or $h_2 G_k$ belongs to the geodesic $[G_k, h_1 G_k]$. Assume, without loss of generality, that $h_1 G_k$ belongs to $[G_k, h_2 G_k]$. It follows that $g (h_1 G_k)$ belongs to the geodesic $[g G_k, g (h_2 G_k)]$, that is, $h_2 G_k$ belongs to $[g G_k, h_1 G_k]$. Therefore,
$$[G_k, g G_k] = [G_k, h_1 G_k] \cup [h_1 G_k, h_2 G_k] \cup [h_2 G_k, g G_k]$$
and 
$$d_k(G_k, g G_k) = d_k(G_k, h_1 G_k) + d_k(h_1 G_k, h_2 G_k) + d_k(h_2 G_k, g G_k).$$
Since the action of $G_{k+1}$ on $\Gamma_k$ is isometric, we have
$$d_k(G_k, h_1 G_k) = d_k(g G_k, h_2 G_k) = d_k(g^2 G_k, h_1 G_k),$$
so
$$d_k(G_k, g G_k) > d_k(G_k, h_1 G_k) + d_k(g G_k, h_2 G_k)$$
$$= d_k(G_k, h_1 G_k) + d_k(g^2 G_k, h_1 G_k) \geqslant d_k(G_k, g^2 G_k).$$
But then, in terms of $\Gamma^{k+1}_k$, we have 
$$l_k(g) = d^{k+1}_k(x^{k+1}_k, g x^{k+1}_k) > d^{k+1}_k(x^{k+1}_k, g^2 x^{k+1}_k) = l_k(g^2)$$
which contradicts Corollary \ref{cor:0-hyp}. Hence, no inversion is possible.
\end{proof}

\begin{lemma}
\label{HNN extension}
For every $k \in [1, n)$ there exist collections $\{C_i \mid i \in I\}$ and $\{D_i \mid i \in I\}$ of subgroups of $G_k$ (the collections may be infinite) and isomorphisms $\{\phi_i : C_i \rightarrow D_i \mid i \in I\}$ such that 
$$G_{k+1} = \langle G_k, \{h_i \mid i \in I\} \mid h_i^{-1} c h_i = \phi_i(c),\ c \in C_i \rangle.$$ 
Moreover, $C_i = G_k \cap h_i G_k h_i^{-1}$ and $D_i = G_k \cap h_i^{-1} G_k h_i$.
\end{lemma}
\begin{proof} 
By Lemma \ref{lem:tree_acted_on}, there exists a $\Z$-tree $\Gamma_k$, whose vertices are left cosets of $G_k$ in $G_{k+1}$ and $G_{k+1}$ acts on it without inversions. Since $G_{k+1}$ acts transitively, $G_{k+1}$ splits into a graph of groups with a single vertex $v_0$ and multiple loops attached, that is, $G_{k+1}$ is isomorphic to an HNN extension with multiple stable letters.

Consider the stabilizer of a vertex $h G_k$ of $\Gamma_k$. We have
$$g (h G_k) = h G_k \Leftrightarrow h^{-1} g h \in G_k \Leftrightarrow g \in h G_k h^{-1},$$ 
so the stabilizer of $h G_k$ is $h G_k h^{-1}$. Every loop at $v_0$ is the image of an edge $(G_k, h G_k) \in E(\Gamma_k)$ for some $h \in G_{k+1}$. The stabilizer of the vertex $G_k$ is $G_k$ itself, so the stabilizer of the edge $(G_k, h G_k)$ is $G_k \cap h G_k h^{-1}$. This group embeds directly into both $G_k$ and $h G_k h^{-1}$, and the latter is sent to the basepoint via conjugation by $h$, hence, the required splitting of $G_{k+1}$.
\end{proof}

\begin{corollary}
\label{finite HNN}
If $G_{k+1}$ is finitely generated, then $G_{k+1}$ is an HNN extension of $G_k$ with finitely many stable letters.
\end{corollary}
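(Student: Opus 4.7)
The plan is to combine Proposition \ref{HNN extension} with a straightforward finiteness argument on generators. Proposition \ref{HNN extension} already gives a presentation
$$G_{k+1} = \langle G_k,\ \{h_i \mid i \in I\} \mid h_i^{-1} c_i h_i = \phi_i(c_i),\ c_i \in C_i\rangle,$$
so the only remaining task is to show that, under the hypothesis that $G_{k+1}$ is finitely generated, the possibly infinite index set $I$ can be replaced by a finite subset without loss of information.

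First, I would fix a finite generating set $S$ of $G_{k+1}$. Using the presentation above, each $s \in S$ admits an expression as a finite word in the alphabet $G_k \cup \{h_i^{\pm 1} \mid i \in I\}$. Since $S$ is finite and each such expression is itself finite, only finitely many stable letters $h_{i_1}, \ldots, h_{i_m}$ occur across all of them. Consequently $G_{k+1} = \langle G_k,\ h_{i_1}, \ldots, h_{i_m}\rangle$, and in particular $G_{k+1}$ is generated by $G_k$ together with finitely many stable letters from the family produced by Proposition \ref{HNN extension}.

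Next, I would verify that the subgroup $H := \langle G_k,\ h_{i_1}, \ldots, h_{i_m}\rangle$, sitting inside the ambient HNN extension, is in fact isomorphic to the HNN extension
$$\langle G_k,\ h_{i_1}, \ldots, h_{i_m} \mid h_{i_j}^{-1} c h_{i_j} = \phi_{i_j}(c),\ c \in C_{i_j},\ 1 \leqslant j \leqslant m\rangle.$$
This follows from Bass--Serre theory applied to the subtree $\Gamma_0 \subseteq \Gamma$ spanned under the $G_k$-action by the edges corresponding to $\{i_1, \ldots, i_m\}$: the group $H$ acts on $\Gamma_0$ with the same vertex and edge stabilizers as prescribed, so $H$ is the fundamental group of the corresponding subgraph of groups. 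Equivalently, one may invoke Britton's lemma inside the ambient HNN extension to conclude that no relation among $G_k$ and $\{h_{i_j}\}$ holds in $G_{k+1}$ other than the ones already present in this restricted presentation. Since $H = G_{k+1}$, we obtain the desired HNN decomposition with finitely many stable letters.

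The main point to be careful about is that last structural step, i.e.\ ruling out the possibility that passing to the subgroup generated by $G_k$ and finitely many $h_{i_j}$ introduces unexpected relations or collapses the HNN structure. This is not really an obstacle but a citation: it is a standard consequence of Bass--Serre theory (a subgraph of groups of a graph of groups has the expected fundamental group), and once invoked, the corollary is immediate from Proposition \ref{HNN extension} and the generator-counting argument above.
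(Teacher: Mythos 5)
Your proposal is correct, but it follows a genuinely different route from the paper. You work directly with a finite generating set: each generator is a word in $G_k \cup \{h_i^{\pm 1} \mid i \in I\}$, so only finitely many stable letters $h_{i_1},\ldots,h_{i_m}$ occur, and the subgroup $H=\langle G_k, h_{i_1},\ldots,h_{i_m}\rangle$ is, by Britton's lemma (equivalently, $\pi_1$-injectivity of subgraphs of groups), exactly the sub-HNN extension over $C_{i_1},\ldots,C_{i_m}$; since $H=G_{k+1}$, the finite decomposition follows. (One small imprecision: the invariant subtree you describe should be spanned by the $H$-orbit rather than ``under the $G_k$-action,'' but your alternative Britton's-lemma formulation covers this, so it is not a gap.) The paper instead quotients out the tree action: by Serre's result, $G_{k+1}$ surjects onto $\pi_1(\Gamma/G_{k+1})$, and since $\Gamma/G_{k+1}$ is a bouquet of $\nu$ loops this group is free of rank $\nu$, where $\nu$ is the number of stable letters; a quotient of an $n$-generated group is $n$-generated, so $\nu \leqslant n$. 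The paper's argument thus needs no structural facts about subgroups of HNN extensions and bounds the number of stable letters of the decomposition from Proposition \ref{HNN extension} directly by the number of generators of $G_{k+1}$; your argument relies on the (standard) embedding of sub-HNN extensions, but in exchange shows that the stable letters actually appearing in expressions of the generators already generate everything together with $G_k$ --- which, combined with Britton's lemma, also forces the original index set $I$ to be finite, since any omitted $h_j$ would lie outside $H=G_{k+1}$.
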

\begin{proof}
By Lemma \ref{HNN extension} we have
$$G_{k+1} = \langle G_k, \{h_i \mid i \in I\} \mid h_i^{-1} c h_i = \phi_i(c),\ c \in C_i \rangle.$$ 
Now, since $G_{k+1}$ is finitely generated, from \cite[Proposition 1.35]{Cohen} it follows that $\{h_i \mid i \in I\}$ is a finite set.
\end{proof}

Let us fix some $k \in [1, n)$. According to Lemma \ref{HNN extension}, we have
$$G_{k+1} = \langle G_k, \{h_i \mid i \in I\} \mid h_i^{-1} c h_i = \phi_i(c),\ c \in C_i \rangle,$$
where $C_i = G_k \cap h_i G_k h_i^{-1}$. Next, we would like to clarify the structure of the associated subgroups $\{C_i \mid i \in I\}$. Fix some $C_i$ and consider the action of $C_i$ on the $\Z^{n-1}$-tree $\Gamma^n_1$ with the distance function $d_1$. Recall that $l_1 : G \to \Z^{n - 1}$ is the corresponding length function based at the point $x^n_1 \in \Gamma^n_1$ and
$$c_1(g, h) = \frac{1}{2}\left(l_1(g) + l_1(h) - l_1(g^{-1} h)\right)$$
for any $g, h \in G$. Since $l_1$ takes values in $\Z^{n-1}$ and not in $\Z^n$, we also introduce the height function $ht_1$ with respect to $l_1$, which is going to be useful when working with distances in $\Gamma^n_1$: for $a \in \Z^{n-1}$ represented by an $(n-1)$-tuple $(a_1, \ldots, a_{n-1}) \in \Z^{n-1}$, we write $ht_1(a) = k$ if $a = (a_1, \ldots,$ $a_k, 0, \ldots, 0)$ and $a_k \neq 0$. For $g \in G$, by $ht_1(g)$ we understand $ht_1(l_1(g))$. There is an obvious connection between the functions $ht$ and $ht_1$: for $g \in G$, we have $ht(g) = k + 1$ if and only if $ht_1(g) = k$.

Recall that some vertices of $\Gamma^n_1$ are labeled by left cosets of $G_1$ in $G$.

\begin{lemma}
\label{co-linear Ci}
Let $c \in C_i$ and $h_i$ be the stable letter associated with $C_i$. Then, either $l_1(c) = c_1(c, c^{-1}) + c_1(c, h_i)$, or $l_1(c) = c_1(c, c^{-1}) + c_1(c^{-1}, h_i)$.
\end{lemma}
\begin{proof}
Notice that $c \in C_i = G_k \cap h_i G_k h_i^{-1}$, so, $c \in G_k$ and $h_i^{-1} c h_i \in G_k$ imply that $ht(c) \leqslant k$ and $ht(h_i^{-1} c h_i) \leqslant k$. Next, $ht(h_i) = k + 1$ since $h_i \notin G_k$ and since $c(g, h) \leqslant \min\{ l(g), l(h) \}$ in general, it follows that $c(c^{-1}, h_i) \leqslant l(c)$, which implies that $ht(c(c^{-1}, h_i)) \leqslant k$. Now, from
$$l(h_i^{-1} c h_i) = 2l(h_i) + l(c) - 2c(h_i, c h_i) - 2c(c^{-1}, h_i)$$
it follows that $ht(c(h_i, c h_i)) = k + 1$. In particular, $ht(c(h_i, c h_i)) > ht(c(c, c h_i))$ since $ht(c(c, c h_i)) \leqslant k$, and, passing to $l_1$ and $c_1$, it implies that
$$c_1(h_i, c h_i) > c_1(c, c h_i).$$
Hence,
$$2c_1(c, h_i) \geqslant 2\min\{c_1(c, c h_i), c_1(h_i, c h_i)\} = 2c_1(c, c h_i) = l_1(c) + l_1(c h_i) - l_1(h_i)$$
$$= 2l_1(c) - 2c_1(c^{-1}, h_i)$$
and we obtain
$$ l_1(c) \leqslant c_1(c, h_i) + c_1(c^{-1}, h_i).$$
It follows that either $c_1(c, h_i) \geqslant \frac{1}{2} l_1(c)$, or $c_1(c^{-1}, h_i) \geqslant \frac{1}{2} l_1(c)$. Without loss of generality we assume that the former holds. Also, notice that $ht(c(h_i, c^{-1} h_i)) = k + 1$ can be obtained using an argument similar to the one which produced $ht(c(h_i, c h_i)) = k + 1$.

\begin{figure}[h]
\centering
\includegraphics[width=2.5in]{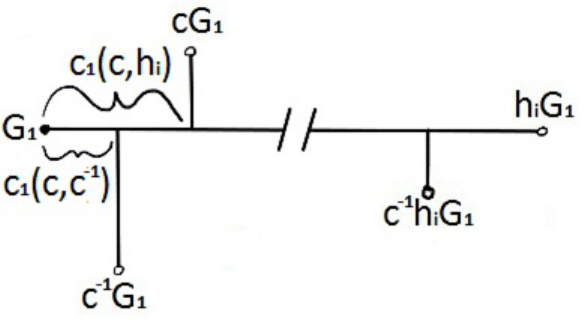}
\caption{The midpoint of $\Delta_2$ and $\Delta_3$}
\label{fig-1}
\end{figure}

Now, consider the geodesic tripods 
$$\Delta_1 = \{c G_1, G_1, h_i G_1\}\ \ {\rm and}\ \ \Delta_2 = \{G_1, c^{-1} G_1, c^{-1} h_i G_1\}$$ 
in $\Gamma^n_1$. Notice that $\Delta_2$ is a translation of $\Delta_1$ by means of $c^{-1}$. The midpoint $Y(c G_1, G_1, h_i G_1)$ of $\Delta_1$ lies on the segment $[G_1, c G_1]$ at the distance of $c_1(c, h_i)$ from the point $G_1$. It follows that the midpoint $Y(G_1, c^{-1} G_1, c^{-1} h_i G_1)$ of $\Delta_2$ lies on the segment $[c^{-1} G_1, G_1]$ at the distance of $c_1(c, h_i)$ from the point $c^{-1} G_1$. Since $l_1(c^2) \geqslant l_1(c)$ by Corollary \ref{cor:0-hyp}, we obtain 
$$c_1(c, c^{-1}) \leqslant \frac{1}{2} l_1(c) \leqslant c_1(c, h_i) < c_1(h_i, c^{-1} h_i),$$ 
so it follows that the midpoint of $\Delta_2$ coincides with the midpoint $Y(G_1, c G_1, c^{-1} G_1)$ of the tripod $\Delta_3 = \{G_1, c G_1, c^{-1} G_1\}$ (see Figure \ref{fig-1}). Hence, the midpoint of $\Delta_2$ lies on the segment $[G_1, c^{-1} G_1]$ at the distance of $c_1(c, c^{-1})$ from the point $G_1$ and eventually we have
$$l_1(c) = l_1(c^{-1}) = d_1(G_1, c^{-1} G_1) =  c_1(c, c^{-1}) + c_1(c, h_i).$$
A similar argument under the assumption $c_1(c^{-1}, h_i) \geqslant \frac{1}{2} l_1(c)$ leads to the equality
$l_1(c) = c_1(c, c^{-1}) + c_1(c^{-1}, h_i)$.
\end{proof}

Recall that the translation length $\ell : G \to \Z^{n - 1}$ related to the action of $G$ on $\Gamma^n_1$ is defined as follows: for any $g \in G$ we have $\ell(g) = \min\{ d_1(x, gx) \mid x \in \Gamma_1^n \}$. Also, recall that $H \leqslant G$ has an {\em abelian action} on $\Gamma^n_1$ if $\ell(g h) \leqslant \ell(g) + \ell(h)$ for any $g, h \in H$ (see \cite[Chapter 3.2]{Chiswell:2001} for details).

\begin{lemma}
\label{abelian action}
The action of each $C_i$ on $\Gamma^n_1$ is abelian. Moreover, each $C_i$ has an abelian action on a $\Z^{k - 1}$-tree.
\end{lemma}
\begin{proof}
Recall that $h_i$ is the stable letter associated with $C_i$. From Lemma \ref{le:conditions_1}, it follows that every $c \in C_i$ acts on $\Gamma_1^n$ either elliptically, or hyperbolically, but not as an inversion.

\begin{figure}[h]
\centering
\includegraphics[width=3.2in]{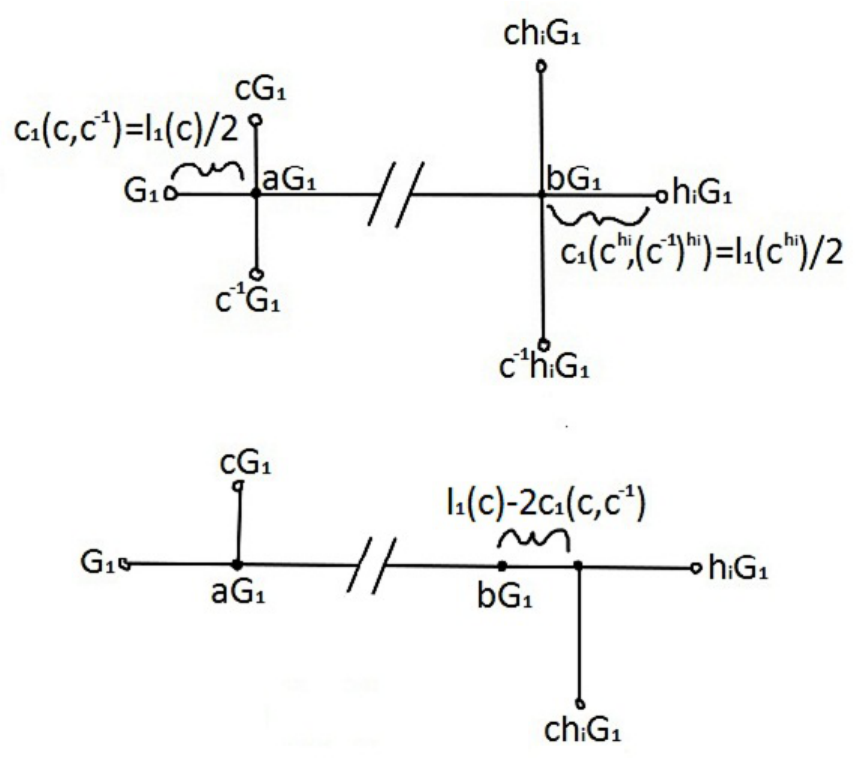}
\caption{The elliptic case}
\label{fig-2}
\end{figure}

Let $c$ act elliptically. Consider the geodesic tripod $\{G_1, c G_1, c^{-1} G_1\}$. Since every branch point of $\Gamma_1^n$ is labeled by a left coset of $G_1$ in $G$, let $a G_1$ be the midpoint of $\{G_1, c G_1, c^{-1} G_1\}$. Consider the tripod $\{a G_1, (c h_i) G_1, h_i G_1\}$ and let $b G_1$ be its midpoint (both tripods and their midpoints are shown in Figure \ref{fig-2}). From \cite[Lemma 3.1.1]{Chiswell:2001} it follows that $a G_1$ is fixed by $c$. Now, the geodesic segment $[a G_1, h_i G_1]$ contains the point $b G_1$ and its image under the action of $c$ is the segment $[a G_1, (c h_i) G_1]$ of the same length. Since $\Gamma_1^n$ is a tree, the point $b G_1$ is fixed by $c$. So, $[a G_1, b G_1] \subseteq Axis(c)$. Moreover, $ht_1(d_1(a G_1, b G_1)) = k$ since $ht_1(c(h_i, c h_i)) = k$, which follows from $ht(c(h_i, c h_i)) = k + 1$ (we refer the reader to the beginning of the proof of Lemma \ref{co-linear Ci} for details) and $ht_1(d_1(G_1, a G_1)) = k - 1$.

\begin{figure}[h]
\centering
\includegraphics[width=3.2in]{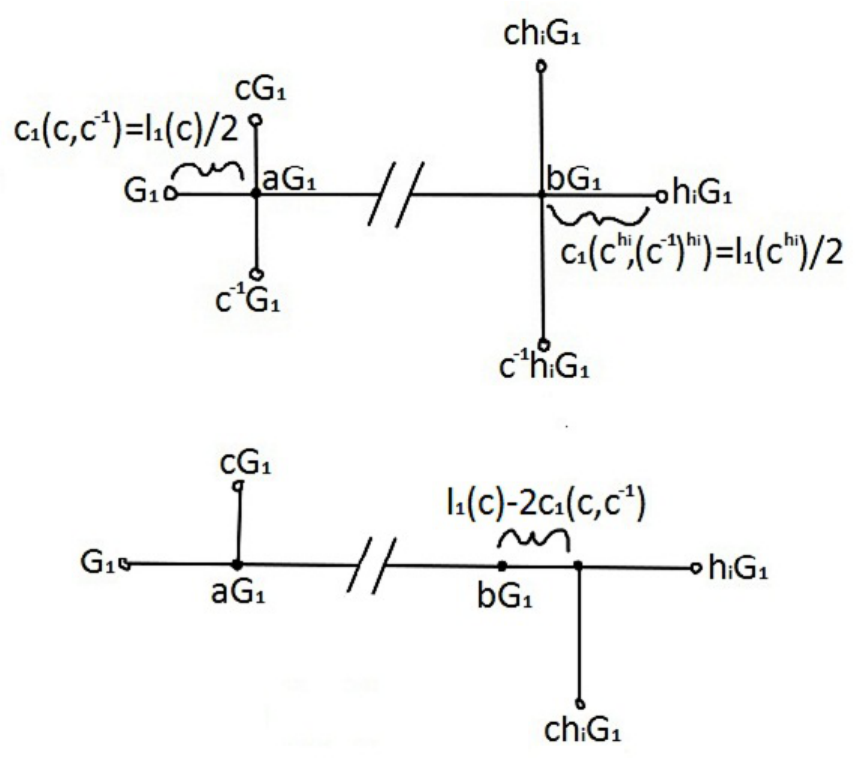}
\caption{The hyperbolic case}
\label{fig-3}
\end{figure}

Let $c$ act hyperbolically. Hence, we have $\ell(c) = l_1(c) - 2c_1(c, c^{-1})$ (see \cite[Theorem 3.1.4]{Chiswell:2001}). By Lemma \ref{co-linear Ci}, either $l_1(c) = c_1(c, c^{-1}) + c_1(c, h_i)$, or $l_1(c) = c_1(c, c^{-1}) + c_1(c^{-1}, h_i)$. Assume, without loss of generality, that the former equality holds and consider the geodesic tripod $\{G_1, c G_1, h_i G_1\}$ with the midpoint $a G_1$ (otherwise, if the latter equality holds we consider the tripod $\{G_1, c^{-1} G_1, h_i G_1\}$ and use an argument similar to the one below). Notice that the midpoint $a G_1$ belongs to the geodesic segment $[G_1, h_i G_1]$. But it also belongs to the segment $[c G_1, (c h_i) G_1]$. Indeed, otherwise we have $c_1(c, c h_i) > c_1(c, h_i) = c_1(h_i, c h_i)$ and we obtain a contradiction with the fact that $ht(c(h_i, c h_i)) = k + 1$ and $ht(c(c, c h_i)) = k$. Next, $d_1(G_1, a G_1) = c_1(c, h_i)$, and from
$$l_1(c) = d_1(G_1, c G_1) = d_1(G_1, a G_1) + d_1(a G_1, c G_1)$$
and 
$$l_1(c) = c_1(c, c^{-1}) + c_1(c, h_i)$$
we obtain $d_1(a G_1, c G_1) = c_1(c, c^{-1})$. Now, both $a G_1$ and $(c a) G_1$ belong to the geodesic segment $[c G_1, (c h_i) G_1]$. Notice that $(c a) G_1$ cannot be closer to $c G_1$ than $a G_1$. Indeed, we have 
$$d_1(c G_1, (c a) G_1) = d_1(G_1, a G_1)  = d_1(G_1, a G_1) - d_1(a G_1, c G_1)$$
$$= l_1(c) - c_1(c, c^{-1}),$$
$$d_1(a G_1, c G_1) = c_1(c, c^{-1})$$
and if we assume that $d_1(c G_1, (c a) G_1) < d_1(a G_1, c G_1)$, then we obtain $l_1(c) - c_1(c, c^{-1}) < 
 c_1(c, c^{-1})$, which is equivalent to $l_1(c) > l_1(c^2)$, which is a contradiction. Hence, $d_1(c G_1, (c a) G_1) > d_1(a G_1, c G_1)$ (the equality $d_1(c G_1, (c a) G_1) = d_1(a G_1, c G_1)$ is impossible since it implies that $a G_1 = (c a) G_1$, that is, $c$ fixes the point $a G_1$, which is a contradiction) and it follows that $a G_1$ belongs to the segment $[c G_1, (c a) G_1]$ and
$$d_1(a G_1, (c a) G_1) = d_1(c G_1, (c a) G_1) - d_1(a G_1, c G_1)$$
$$= l_1(c) - c_1(c, c^{-1}) - c_1(c, c^{-1}) = l_1(c) - 2c_1(c, c^{-1}) = \ell(c).$$
It follows that the point $a G_1$ belongs to $Axis(c)$. Finally, consider the geodesic tripod $\{G_1, h_i G_1, (c h_i) G_1\}$ and let its midpoint be $f G_1$. Now, consider a point $v$ on the geodesic segment $[G_1, f G_1]$ at the distance of $l_1(c) - 2c_1(c, c^{-1})$ from $f G_1$. Notice that the segment $[a G_1, v]$ is translated by $c$ to the segment $[(c a) G_1, f G_1]$, that is, $c v = f G_1$ and it follows that $v$ is labeled by a coset $b G_1$ (see  Figure \ref{fig-3}). Since we have $c (b G_1) = f G_1$, we can take $f = c b$ and by the choice of $v$ we have 
$$d_1(b G_1, (c b) G_1) =  l_1(c) - 2c_1(c, c^{-1}) = \ell(c).$$

This means that the point $b G_1$ also belongs to $Axis(c)$, therefore, $[a G_1, b G_1] \subseteq Axis(c)$. Also notice that $ht_1(d_1(a G_1, b G_1)) = k$ since $ht_1(c(h_i, c h_i)) = k$ (which follows from $ht(c(h_i, c h_i)) = k + 1$) and $ht_1(d_1(G_1, a G_1)) = ht_1(d_1(b G_1, (c b) G_1)) = k - 1$.

\smallskip

Notice that both in the elliptic, and hyperbolic case, $Axis(c)$ contains a sub-segment $I_c$ of $[G_1, h_i G_1]$, whose length has $l_1$-height equal to $k$ and the distance of one of its endpoints from $G_1$ has $l_1$-height equal to $k - 1$. It follows that for any $c_1, c_2 \in C_i$ we have
$$\text{diam}(Axis(c_1) \cap Axis(c_2)) \geqslant \text{diam}(I_{c_1} \cap I_{c_2})$$ 
and $ht_1(\text{diam}(I_{c_1} \cap I_{c_2})) = k$. In particular, 
$$Axis(c_1) \cap Axis(c_2) \neq \varnothing$$
for any $c_1, c_2 \in C_i$. Since, by Lemma \ref{le:conditions_1}, elements of $G$ cannot act on $\Gamma^n_1$ as inversions, from \cite[Corollary 3.2.4]{Chiswell:2001}, it follows that 
$$\ell(c_1 c_2) \leqslant \ell(c_1) + \ell(c_2),$$
so the action of $C_i$ on $\Gamma^n_1$ is abelian.

\smallskip

Finally, since $C_i \subseteq G_k$ and the restriction of $l_1$ to $G_k$ takes values in $\Z^{k-1}$, it follows that $C_i$ has an abelian action on a $\Z^{k - 1}$-subtree of $\Gamma^n_1$ containing the vertex labeled by $G_1$.
\end{proof}

\begin{corollary}
\label{define elliptic}
Let $E_i$ be the set of elements of $C_i$ acting elliptically on $\Gamma^n_1$. Then $E_i \unlhd C_i$ and $C_i / E_i$ is a free abelian group of rank at most $k - 1$.
\end{corollary}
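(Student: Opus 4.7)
The plan is to exhibit $E_i$ as the kernel of a homomorphism from $C_i$ into a convex subgroup of $\Z^{n-1}$, from which both normality and the structure of the quotient follow immediately.

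First, by Proposition \ref{abelian action} the action of $C_i$ on $\Gamma^n_1$ is abelian. A standard fact about abelian actions on $\Lambda$-trees (see Chiswell, \cite[Section 3.2]{Chiswell:2001}) is that either all hyperbolic elements share a common axis, or there is a common fixed end; in both situations one can assign a sign to each translation length, producing a homomorphism $\chi : C_i \to \Z^{n-1}$ such that $|\chi(c)| = \ell(c)$ and whose kernel is exactly the set of elements acting without translation, i.e.\ the elliptic elements. Thus $E_i = \ker \chi$ is normal in $C_i$ and $C_i/E_i$ embeds into $\Z^{n-1}$.

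It remains to refine the target from $\Z^{n-1}$ to $\Z^{k-1}$. For any $c \in C_i$, since $C_i \leqslant G_k$ we have $l(c) \in \Z^k$, and therefore $l_1(c)$, obtained by dropping the leftmost coordinate, lies in the convex subgroup $\Z^{k-1} \leqslant \Z^{n-1}$. Now for any element $c$ of a group acting on a $\Lambda$-tree, $\ell(c) \leqslant d(x,cx)$ for every vertex $x$, so in particular $\ell(c) \leqslant l_1(c) \in \Z^{k-1}$. Since $\Z^{k-1}$ is convex in $\Z^{n-1}$ and $|\chi(c)| = \ell(c)$, we conclude $\chi(c) \in \Z^{k-1}$. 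Hence $\chi$ factors as $C_i \to \Z^{k-1}$, and $C_i/E_i$ embeds as a subgroup of the free abelian group $\Z^{k-1}$, giving that it is itself free abelian of rank at most $k-1$.

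The main technical point is the first step, namely invoking the structure theorem for abelian actions to produce the signed translation length as an honest homomorphism with the correct kernel; this is exactly where the hypothesis $l_1(g^2) \geqslant l_1(g)$ proved in Lemma \ref{lem:0-hyp} is needed to rule out the dihedral alternative (as was already used at the end of Proposition \ref{abelian action}). Once that is in place, the rank bound reduces to the elementary observation that $l_1$ takes $G_k$ into the convex subgroup $\Z^{k-1}$ of $\Z^{n-1}$ and that the translation length is dominated by the based length.
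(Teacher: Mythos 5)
Your proof is correct and follows essentially the same route as the paper: both invoke Chiswell's characterization of abelian actions (Section 3.2 of \cite{Chiswell:2001}, cited there as Proposition 3.2.7) to get a homomorphism on $C_i$ whose absolute value is the translation length and whose kernel is exactly the elliptic set $E_i$, and both obtain the rank bound from the fact that $l_1$ restricted to $C_i \leqslant G_k$ takes values in $\Z^{k-1}$. The only cosmetic difference is that you place the image in $\Z^{k-1}$ via $\ell(c) \leqslant l_1(c)$ and convexity, while the paper uses the explicit expression $l_1(c^2) - l_1(c) \in \Z^{k-1}$ for the translation length.
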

\begin{proof}
By Lemma \ref{abelian action}, the action of $C_i$ on $\Gamma^n_1$ is abelian. Hence, by \cite[Proposition 3.2.7]{Chiswell:2001}, there exists a homomorphism $\rho_i : C_i \to \Z^{n - 1}$ such that $\ell(c) = |\rho_i(c)|$ for every $c \in C_i$. In fact, since $C_i \subseteq G_k$ and $\ell(c) = l_1(c) - 2c_1(c, c^{-1})$, it follows that $\ell(c) \in \Z^{k-1}$ since the restriction of $l_1$ to $G_k$ takes values in $\Z^{k-1}$. The kernel of $\rho_i$ is the set of elements $c \in C_i$ whose translation length $\ell(c) = 0$, so $ker(\phi_i) = E_i$ and $C_i / E_i$ embeds into $\Z^{k-1}$, that is, it is a free abelian group of rank at most $k - 1$.
\end{proof}

Now we can formulate and prove the auxiliary result we mentioned in the beginning of this subsection.

\begin{prop}
\label{pr:main_technical}
Let $G$ be a torsion-free group with a $\delta$-hyperbolic $\delta$-regular length function $l : G \rightarrow \Z^n$, where $\Z^n$ has the right lexicographic order. Let $G_k = \{g \in G \mid ht(g) \leqslant k\}$ so that $G$ is the union of the finite chain of subgroups
$$1 = G_0 \leqslant G_1 \leqslant \cdots \leqslant G_n = G.$$ 
If the following conditions are satisfied
\begin{enumerate}
\item[(a)] $ht(\delta) = 1$,
\item[(b)] $l(g) > 0$ for every non-trivial $g \in G$,
\item[(c)] $G_1$ is a finitely generated isolated subgroup of $G$,
\end{enumerate}
then for every $k \in [1, n)$, the group $G_{k+1}$ is isomorphic to an HNN extension of $G_k$, where every associated subgroup has an abelian action on a $\Z^{k-1}$-tree. Moreover, if $G_{k+1}$ is finitely generated then the HNN extension for $G_{k+1}$ has only a finite number of stable letters.
\end{prop}
\begin{proof}
The proof is just a combination of the results proved in this subsection. 

By Lemma \ref{HNN extension}, for every $k \in [1, n)$ we have a splitting of $G_{k+1}$ into an HNN extension
$$G_{k+1} = \langle G_k, \{h_i \mid i \in I\} \mid h_i^{-1} c h_i = \phi_i(c),\ c \in C_i \rangle,$$
where $C_i = G_k \cap h_i G_k h_i^{-1}$. The set of indices $I$ is finite if $G_{k+1}$ is finitely generated (by Corollary \ref{finite HNN}). Finally, from Lemma \ref{abelian action} it follows that each $C_i$ in the HNN extension above has an abelian action on a $\Z^{k - 1}$-tree.
\end{proof}

\subsection{Proof of the main theorem}
\label{sec:special}

The goal of this subsection is to prove the following theorem stated in the introduction.

\begin{theorem}
\label{th:main_result}
Let $G$ be a finitely generated torsion-free group with a proper $\delta$-hyperbolic $\delta$-regular length function $l : G \rightarrow \Z^n$, where $\Z^n$ has the right lexicographic order. Let $G_k = \{g \in G \mid ht(g) \leqslant k\}$ so that $G$ is the union of the finite chain of subgroups
$$1 = G_0 \leqslant G_1 \leqslant \cdots \leqslant G_n = G.$$ 
If the following conditions are satisfied
\begin{enumerate}
\item[(a)] $ht(\delta) = 1$,
\item[(b)] $l(g) > 0$ for every non-trivial $g \in G$,
\item[(c)] $G_1$ is a finitely generated isolated subgroup of $G$,
\end{enumerate}
then $G_1$ is a word-hyperbolic group and for every $k \in [1, n)$, the group $G_{k+1}$ is isomorphic to an HNN extension of $G_k$ with a finite number of stable letters, where each associated subgroup is isomorphic to either $\Z^m$, or an extension of $\Z^m$ by $\Z$, where $m \leqslant k - 1$.
\end{theorem}

From now on we assume that all conditions from the statement of Theorem \ref{th:main_result} hold. Notice that the assumptions on $G$ and $l$ of Theorem \ref{th:main_result} imply the assumptions of Proposition \ref{pr:main_technical}.

Recall Example \ref{ex:2} above. Note that the condition (b) of Theorem \ref{th:main_result} is satisfied by the choice of $x_0$ in the example. It follows that every finitely generated group $G$ acting properly and co-compactly on a geodesic $\Z^n$-hyperbolic space $X$ so that the stabilizer of some $\Z$-subspace $X_0$ is finitely generated, by Theorem \ref{th:main_result} can be represented as a finite series of HNN extensions described above. In light of this observation, save for one technical assumption, we can think of groups satisfying the conditions of Theorem \ref{th:main_result} as a higher-dimensional analog of classical hyperbolic groups.

Recall that for every $g \in G_1$ we have $l(g) = (a, 0, \ldots, 0)$ for some positive $a \in \Z$, hence, we can define $d : G_1 \times G_1\rightarrow \Z$ by setting $d(g, h) = k$ if $l(g^{-1} h) = (k, 0, \ldots, 0)$. It is easy to see that $d$ is a $\Z$-metric on $G_1$.

Since we assume that $G_1$ is finitely generated, let $S \subset G_1$ be a generating set for $G_1$ and $\Gamma(G_1, S)$ the Cayley graph of $G$ with respect to $S$.

\begin{lemma}
\label{G1 hyperbolic}
$G_1$ is a torsion-free word-hyperbolic group and $\Gamma(G_1, S)$ with the word metric $\mid \cdot \mid_S$ is quasi-isometric to $(G_1, d)$.
\end{lemma}
\begin{proof}
First of all, $(G_1, d)$ is a $\delta$-hyperbolic ($\Z$-)metric space which follows from $\delta$-hyperbolicity of $l$ and the assumption (a) on $\delta$. Moreover, $(G_1, d)$ is regular since $l$ is regular.

Using the construction of \cite[Section 6.1]{GKMS:2012}, from $(G_1, d)$ we obtain a metric space $\Gamma$, which quasi-isometrically embeds into $(G_1, d)$. $G_1$ acts on $(G_1, d)$ by left multiplication, this action induces an action of $G_1$ on $\Gamma$. If $x$ is an arbitrary point in $\Gamma$, since $l$ is proper, the action of $G_1$ on $\Gamma$ is proper (relative to $x$) too. By \cite[Lemma 6.3]{GKMS:2012}, $\Gamma$ is quasi-isometric to the Cayley graph $\Gamma(G_1, S \cup G_x)$ with respect to the generating $S \cup G_x$, where $G_x = Stab_{G_1}(x)$. Notice that $G_x$ is trivial since $l(g) > 0$ for every non-trivial $g \in G$. It follows that $\Gamma$ is quasi-isometric to $\Gamma(G_1, S)$. Hence, $(G_1, d)$ is quasi-isometric to $\Gamma(G_1, S)$. Finally, since $(G_1, d)$ is hyperbolic and $S$ is finite, it follows that $G_1$ is hyperbolic.
\end{proof}

Recall that for every $k \in (0, n)$, the function $l_k : G \to \Z^{n - k}$ is a Lyndon length function on $G$ and there exists a $\Z^{n - k}$-tree $(\Gamma^n_k, d^n_k)$, an action of $G$ on $\Gamma^n_k$, and a point $x^n_k \in \Gamma^n_k$ such that $l_k(g) = d^n_k(x^n_k, g x^n_k)$ for every $g \in G$. Also recall that some points of $\Gamma^n_k$ are labeled by left cosets of $G_k$ in $G$.

\begin{lemma}
\label{stabilizers cyclic}
Suppose $h \in G \ssm G_k$ and let $h G_k$ be a point in $\Gamma^n_k$. Then $Stab_G(h G_k) \cap G_1$ is either trivial or cyclic.
\end{lemma}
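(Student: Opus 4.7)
The plan is to realize the stabilizer as a subgroup of an edge group of the HNN decomposition of $G_{k+1}$ over $G_k$, use the abelian action on $\Gamma_1^n$ to show that this subgroup is abelian, and then invoke the word-hyperbolicity of $G_1$ to conclude.

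First, set $S := Stab(hG_k) \cap G_1 = G_1 \cap hG_kh^{-1}$. Since $G_1 \subseteq G_k$, this equals $G_1 \cap (G_k \cap hG_kh^{-1})$; by Proposition \ref{HNN extension} together with the transitivity of the action of $G_{k+1}$ on the vertices of $\Gamma$, the subgroup $G_k \cap hG_kh^{-1}$ is conjugate inside $G_k$ to one of the edge groups $C_i$, so after conjugation I may assume $S \leqslant C_i$. Moreover, every $g \in G_1$ fixes the base vertex $G_1 \in \Gamma_1^n$ and therefore acts elliptically there, so $l_1^h(g) = 0$ and Corollary \ref{define elliptic} yields $S \leqslant E_i$.

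Next, Proposition \ref{abelian action} tells us that the action of $C_i$ on $\Gamma_1^n$ is abelian. In an abelian tree action two elliptic elements cannot have disjoint fixed subtrees -- otherwise their product would have strictly positive translation length, contradicting abelianness -- so the fixed subtrees of the elements of $E_i$ pairwise intersect and, by the Helly property for subtrees, share a common fixed subtree $Y \subseteq \Gamma_1^n$. Hence every $g \in S$ fixes both $G_1$ and $Y$, and, acting by isometries, fixes the segment $[G_1, Y]$ pointwise.

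I would then use this rich fixed-point data to show that $S$ is abelian. For $g_1, g_2 \in S$, the conjugate $g_1 g_2 g_1^{-1}$ has the same translation length as $g_2$ and fixes $g_1(Y) = Y$ pointwise, so $g_1 g_2 g_1^{-1}$ and $g_2$ agree as isometries on a large fixed subset of $\Gamma_1^n$; the no-inversion property (extending the final remark of Lemma \ref{lem:tree_acted_on} to $\Gamma_1^n$) together with the isolation hypothesis (b) of Theorem \ref{th:main_result} should then force $g_1 g_2 g_1^{-1} = g_2$, i.e.\ $[g_1, g_2] = 1$. Finally, by Lemma \ref{G1 hyperbolic}, $G_1$ is torsion-free and word-hyperbolic, and in such a group every abelian subgroup is infinite cyclic (or trivial); so $S$ is trivial or cyclic.

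The main obstacle is the commutativity step: abelianness of an action is intrinsically a statement about translation lengths, not about group-theoretic commutators, and converting one to the other requires a careful use of the no-inversion property together with the isolation of $G_1$. If that step resists, a fallback is to note that $S$ lies in the centralizer (in $G$) of a common hyperbolic element coming from $C_i / E_i$ (when that quotient is non-trivial) and argue cyclicity via Chiswell's analysis of abelian actions (\cite[Corollary 3.2.4, Proposition 3.2.7]{Chiswell:2001}) restricted to $G_1$.
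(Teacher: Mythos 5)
There is a genuine gap, and it sits exactly where you flagged it: the step from abelianness of the action to commutativity of $S=Stab(hG_k)\cap G_1$. In fact that route cannot work even in principle. Since $S\leqslant G_1$, every element of $S$ fixes the base vertex $G_1$ of $\Gamma_1^n$, so $S$ acts elliptically with translation length zero and its action is automatically ``abelian'' in Chiswell's sense no matter what group $S$ is; a priori $S$ could be a nonabelian free subgroup of $G_1$ and the tree $\Gamma_1^n$ would not see it. For the same reason, the argument ``$g_1g_2g_1^{-1}$ and $g_2$ both fix $Y$ pointwise, hence are equal'' fails: the pointwise stabilizer of a subtree through the basepoint contains (conjugates of) all of $G_1$, which is a large group, and neither the no-inversion property nor the isolation hypothesis (b) constrains elements acting trivially on a subtree. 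Abelianness of a tree action is a statement about translation lengths only, and translation lengths vanish identically on $S$; no amount of fixed-point bookkeeping in $\Gamma_1^n$ can recover algebraic structure of a subgroup of a vertex stabilizer. (Secondary imprecision: $G_k\cap hG_kh^{-1}$ is only contained in a conjugate of an edge group, and for $h$ of height larger than $k+1$ it does not sit in the $G_{k+1}$-picture at all; but this is repairable, unlike the main step.)

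The missing idea is that one must work inside the word-hyperbolic group $G_1$ itself rather than in the quotient tree. The paper's proof shows that for $g\in S$ the coarse equality $c(h,gh)\approx$ height of $h$ together with $\delta$-hyperbolicity forces $\max\{c(g,h),c(g^{-1},h)\}\geqslant l(g)/2-\delta$; then, using properness (so an infinite $S$ has elements of unbounded word length in $G_1$), one produces sequences in $S$ whose pairwise Gromov products tend to infinity, hence a single boundary point $\alpha\in\partial G_1$, and checks that all of $S$ fixes $\alpha$. Cyclicity then follows from the classical facts that $Stab_{G_1}(\alpha)$ is virtually cyclic in a word-hyperbolic group and that a torsion-free virtually cyclic group is cyclic. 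Note that your proposal never invokes properness or the quasi-isometry of Lemma \ref{G1 hyperbolic} in a substantive way, whereas these are exactly what make the boundary argument run; if you want to salvage your outline, replace the ``abelian action $\Rightarrow$ abelian subgroup'' step by this boundary-stabilizer argument (or some other argument exploiting the negative curvature of $G_1$), since the conclusion genuinely uses the hyperbolic geometry of $G_1$ and not the structure of the action on $\Gamma_1^n$.
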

\begin{proof}
First of all, notice that $G_1 \leqslant G_k$ and $G_k$ stabilizes the point $x^n_k \in \Gamma^n_k$ labeled by $G_k$, that is, $G_1 \leqslant Stab_G(G_k)$. 

Suppose $Stab_G(h G_k) \cap G_1$ is non-trivial and take a non-trivial element $g$ from the intersection. Observe that $Stab_G(h G_k) = h G_k h^{-1}$, so we have $h^{-1} g h \in G_k$ and from
$$c(h, g h) = \frac{1}{2} (l(h) + l(g h) - l(h^{-1} g h)$$
we obtain $ht(c(h, g h)) = ht(h) > k$. Since $c(g, g h) \leqslant l(g)$,we obtain $ht(c(g, g h)) \leqslant 1$ and
$$c(g, h) \geqslant \min\{c(g, gh), c(h, gh)\} - \delta = c(g, gh) -\delta = l(g) - c(g^{-1}, h) - \delta.$$
Thus,
$$c(g, h) + c(g^{-1}, h) \geqslant l(g) - \delta.$$
and either 
$$c(g, h) \geqslant \frac{1}{2} \left(l(g) - \delta\right),$$
or 
$$c(g^{-1}, h) \geqslant \frac{1}{2} \left(l(g) - \delta\right).$$
Notice that for every $g \in Stab_G(h G_k) \cap G_1$ we have $l(g) = (a, 0, \ldots, 0)$, where $a$ is a natural number, since $g \in G_1$. It follows that $c(g, h) \in \R$ since $c(g, g h) \leqslant l(g)$.

Let $\{x_m\}$ and $\{y_m\}$ be two sequences of elements of $Stab_G(h G_k) \cap G_1$ such that $\{c(x_m, h)\} \to \infty$ and $\{c(y_m, h)\} \to \infty$ as $m \to \infty$. At least one such sequence exists. Indeed, we assumed that $Stab_G(h G_k) \cap G_1$ is non-trivial, so $Stab_G(h G_k) \cap G_1$ is infinite because $G_1$ is torsion-free. Since $(G_1, d)$ is a proper metric space, the lengths of elements of $Stab_G(h G_k) \cap G_1$ are not bounded by any constant, so it is possible to find a sequence of elements $\{z_q\} \subseteq Stab_G(h G_k) \cap G_1$ such that $l(z_q) = (a_q, 0, \ldots, 0)$ and $a_q \to \infty$ as $q \to \infty$ (in this case we simply write $l(z_q) \to \infty$). Finally, using the inequalities
$$c(g, h) \geqslant \frac{1}{2} \left(l(g) - \delta\right),\ \ \ \ c(g^{-1}, h) \geqslant \frac{1}{2} \left(l(g) - \delta\right),$$
at least one of which holds for $g$, and taking subsequences of $\{z_q\}$, we obtain the required.

Notice that since $c(x_m, y_r) \geqslant \min\{c(x_m, h), c(y_r, h)\} - \delta$, and 
$$\lim_{m \to \infty} c(x_m, h) = \infty = \lim_{r \to \infty} c(y_r, h),$$ 
we automatically have that 
$$\lim_{m, r \to \infty} c(x_m, y_r) = \infty.$$ 
Similarly, we have $c(x_m, x_r) \geqslant \min\{c(x_m, h), c(x_r, h)\} - \delta$, so
$$\lim_{m, r \to \infty} c(x_m, x_r) = \infty.$$ 
In other words, $\{x_m\}$ and $\{y_m\}$ converge to the same point of the boundary $\partial(G_1, d)$ of $(G_1, d)$ (which is a $\delta$-hyperbolic proper metric space). Now, if $S$ is a generating set of $G_1$, then, by Lemma \ref{G1 hyperbolic}, $\Gamma(G_1, S)$ is quasi-isometric to $(G_1, d)$, hence, $\{x_m\}$ and $\{y_m\}$ converge to the same point of the boundary $\partial\Gamma(G_1, S)$ of $\Gamma(G_1, S)$. Denote this point by $\alpha$.


Recall that $g \in Stab_G(h G_k) \cap G_1$ is an arbitrary non-trivial element of the intersection and let $\{x_r\}$ be a sequence of elements of $Stab_G(h G_k) \cap G_1$ such that $\lim_{r \to \infty} c(x_r, h) = \infty$. Hence, 
$$c(g x_r, h) \geqslant \min\{c(g x_r, g h), c(g h, h)\} - \delta.$$
Since $g \in Stab(h G_k) \cap G_1$, we have $ht(h^{-1} g h) \leqslant k$, which implies
$$ht(c(g h, h)) = ht(l(g h) + l(h) - l(h^{-1} g h)) \geqslant k + 1 > ht(g x_r).$$
Since $c(g x_r, g h) \leqslant l(g x_r)$, we have $ht(c(g x_r, g h)) \leqslant k$ and it follows that $c(g x_r, g h) < c(g h, h)$. Hence, $c(g x_r, h) \geqslant c(g x_r, g h) - \delta$. Next,
$$c(g x_r, g h) = \frac{1}{2}(l(g x_r) + l(g h) - l(x_r^{-1} h))$$
$$\geqslant \frac{1}{2}(l(x_r) - l(g) + l(h) - l(g) - l(x_r^{-1} h)$$
$$= \frac{1}{2}(l(x_r) + l(h) - l(x_r^{-1} h) - 2 l(g)) = c(x_r, h) - l(g).$$
It follows that 
$$\lim_{r \to \infty} c(g x_r, h) \geqslant \lim_{r \to \infty} (c(x_r, h) - l(g)) = \infty.$$
Hence, $g \alpha = \alpha$ for any $g \in Stab_G(h G_k) \cap G_1$, so 
$$g \in Stab_G(h G_k) \cap G_1 \subseteq Stab_{G_1}(\alpha).$$ 
From \cite[Theorem 8.30]{Ghys_delaHarpe:1991}) it follows that $Stab_{G_1}(\alpha)$ is virtually cyclic, and since $G_1$ is torsion-free, so is $Stab_{G_1}(\alpha)$. Finally, by \cite[Lemma 3.2]{Macpherson:1996}), $Stab_{G_1}(\alpha)$ is cyclic, so is $Stab_G(h G_k) \cap G_1$.
\end{proof}

Recall that $\Gamma^n_1$ is a $\Z^{n-1}$-tree, which $G$ acts upon with the underlying length function $l_1 : G \to \Z^{n - 1}$, and all branch points of $\Gamma_1^n$ (possibly some other points too) are labeled by left cosets of $G_1$ in $G$. Also recall that $d_1 : \Gamma_1^n \times \Gamma_1^n \to \Z^{n-1}$ is the distance function on $\Gamma_1^n$.

By Lemma \ref{HNN extension}, $G_{k+1}$ is an HNN extension of $G_k$ with finitely many stable letters $h_i$ and associated subgroups $C_i = G_k \cap h_i G_k h_i^{-1}$. For every such $C_i$, in Corollary \ref{define elliptic} we defined $E_i$ to be the set of elements of $C_i$ acting elliptically on $\Gamma^n_1$, moreover, we proved that $E_i$ is a normal subgroup of $C_i$ and $C_i / E_i$ is a free abelian group of rank at most $k - 1$. Now, using our new assumptions on $G$ and $l$ we can further clarify the structure of $E_i$ and $C_i$.

\begin{lemma}
\label{elliptic cyclic}
$E_i$ is either trivial or an infinite cyclic group.
\end{lemma}
\begin{proof}
Notice that if $g \in E_i$, then $l_1(g) - 2c_1(g, g^{-1}) = \ell(g) = 0$ and $l_1(g) = 2c_1(g, g^{-1})$. Hence, by Lemma \ref{co-linear Ci}, we have either
$$c_1(g, h_i) = c_1(g, g^{-1}) = \frac{1}{2} l_1(g)$$
or
$$c_1(g^{-1}, h_i) = c_1(g, g^{-1}) = \frac{1}{2}  l_1(g).$$
Observe that if $c_1(g, h_i) = c_1(g, g^{-1})$, then 
$$c_1(g^{-1}, h_i) \geqslant \min\{c_1(g, h_i), c_1(g, g^{-1})\} = c_1(g, g^{-1}) = \frac{1}{2} l_1(g),$$
and if $c_1(g^{-1}, h_i) = c_1(g, g^{-1})$, then
$$c_1(g, h_i) \geqslant \min\{c_1(g^{-1}, h_i), c_1(g, g^{-1})\} = c_1(g, g^{-1}) = \frac{1}{2} l_1(g).$$
In other words, for any $g \in E_i$ we have 
$$c_1(g^{-1}, h_i) \geqslant \frac{1}{2} l_1(g),\ \ \ \ c_1(g, h_i) \geqslant \frac{1}{2} l_1(g).$$
Hence, for any $g_1, g_2 \in E_i$ we obtain
$$c_1(g_1^{-1}, g_2) \geqslant \min\{c_1(g_1^{-1}, h_i), c_1(g_2, h_i)\} \geqslant \frac{1}{2} \min\{l_1(g_1), l_1(g_2)\},$$
that is,
$$l_1(g_1) + l_1(g_2) - l_1(g_1 g_2) \geqslant \min\{l_1(g_1), l_1(g_2)\},$$
which implies that
$$l_1(g_1 g_2) \leqslant \max\{l_1(g_1), l_1(g_2)\}.$$
It follows that for any $g \in E_i$, the set $E_i(g) = \{h \in E_i \mid l_1(h) \leqslant l_1(g)\}$ is a subgroup of $E_i$.

Let $g \in E_i$ and consider the geodesic tripod $\{G_1, g G_1, g^{-1} G_1\}$ in $\Gamma_1^n$. Notice that the midpoint $v_g =  Y(G_1, g G_1, g^{-1} G_1)$ of the tripod is fixed under the action of $g$. Indeed, since $l_1(g) = 2c_1(g, g^{-1})$, we have
$$d_1(G_1, v_g) = c_1(g, g^{-1}) = \frac{1}{2} l_1(g) = \frac{1}{2} d(G_1, gG_1) = \frac{1}{2} d(G_1, g^{-1} G_1),$$
that is, $d_1(G_1, v_g) = d_1(g G_1, v_g) = d_1(g^{-1} G, v_g)$. Now, the segment $[G_1, g^{-1} G_1]$ is translated to $[G_1, g G_1]$ under the action of $g$ and since $v_g$ is the midpoint of both, we have $g v_g = v_g$.

Assume, without loss of generality that
$$c_1(g, h_i) = c_1(g, g^{-1}) = \frac{1}{2} l_1(g)$$
and consider the geodesic tripod $\{G_1, g G_1, h_i G_1\}$ in $\Gamma_1^n$ (otherwise, if $c_1(g^{-1}, h_i) = c_1(g, g^{-1})$, then we can consider the tripod $\{G_1, g^{-1} G_1, h_i G_1\}$ and apply a similar argument). Let $c_g \in G$ be such that $c_g G_1 = Y(G_1, g G_1, h_i G_1)$ is the midpoint of this tripod. Recall that every branch point of $\Gamma_1^n$ is labeled by a coset of $G_1$ in $G$, so such $c_g$ always exists. From $c_1(g, h_i) = c_1(g, g^{-1})$ it follows that $c_g G_1$ coincides with the point $v_g$, that is, $c_g G_1$ is fixed by $g$. 

Next, consider the geodesic tripod $\{c_g G_1, (g h_i) G_1, h_i G_1\}$ in $\Gamma_1^n$ and let $d_g \in G$ be such that $d_g G_1$ is the midpoint of this tripod. Since $c_g G_1$ is fixed by $g$, it follows that $d_g G_1$ is also fixed by $g$ (indeed, the segment $[c_g G_1, h_i G_1]$ is translated to $[c_g G_1, (g h_i) G_1]$ with $c_g G_1$ being fixed, so $d_g G_1$ is fixed too). Furthermore, since $c_g G_1 \in [G_1, h_i G_1]$, we have that $d_g G_1$ is also the midpoint of the geodesic tripod $\{G_1, (g h_i) G_1, h_i G_1\}$. Hence, it belongs to the segment $[G_1, h_i G_1]$ and $d_1(G_1, d_g G_1) = c_1(h_i, g h_i)$.

Then suppose that $h \in E_i(g)$, in other words $h \in E_i$ and $l_1(h) \leqslant l_1(g)$. We can apply the above argument to $h$ instead of $g$ and obtain the points $c_h G_1, d_h G_1$ fixed by $h$ on the segment $[G_1, h_i G_1]$. Notice that since $l_1(h) \leqslant l_1(g)$, it follows that $c_1(h, h^{-1}) \leqslant c_1(g, g^{-1})$ and we have
$$d_1(G_1, c_h G_1) \leqslant d_1(G_1, c_g G_1).$$
It follows that
$$[c_g G_1, d_h G_1] \subseteq [c_h G_1, d_h G_1],$$ 
which means that $h$ fixes $c_g G_1$. Hence, $h \in c_g G_1 c_g^{-1}$ for every $h \in E_i(g)$ and we obtain 
$c_g^{-1} E_i(g) c_g \leqslant G_1$. Furthermore, since $E_i(g) \leqslant E_i \leqslant C_i = G_k \cap h_i G_k h_i^{-1}$, we have $E_i(g) \leqslant h_i G_k h_i^{-1}$, hence, $c_g^{-1} E_i(g) c_g \leqslant c_g^{-1} h_i G_k h_i^{-1} c_g$. It follows that 
$$c_g^{-1} E_i(g) c_g \leqslant G_1 \cap c_g^{-1} h_i G_k h_i^{-1} c_g = G_1 \cap Stab_G((c_g^{-1} h_i) G_k),$$
where $(c_g^{-1} h_i) G_k$ is viewed as a point in $\Gamma^n_k$. Since $c_g \in G_k$ and $h_i \notin G_k$, we have $c_g^{-1} h_i \notin G_k$ and by Lemma \ref{stabilizers cyclic}, $c_g^{-1} E_i(g) c_g$ is either trivial, or infinite cyclic. Hence, $E_i(g)$ is either trivial, or infinite cyclic.

Suppose that there exist non-trivial $g, h \in E_i$ such that $l_1(g) < l_1(h)$. Hence, both $E_i(g)$ and $E_i(h)$ are non-trivial (indeed, $g \in E_i(g)$ and $h \in E_i(h)$) cyclic and $E_i(g) < E_i(h)$. It follows that the index $|E_i(h) : E_i(g)|$ is finite. However, that implies that there exist $m, n \in \Z$ such that $m \neq n$ and $h^m E_i(g) = h^n E_i(g)$. In other words, $h^{m - n} \in E_i(g)$, so, $l_1(h^{m - n}) < l_1(h)$, which is impossible.

It follows that for all non-trivial elements $g, h \in E_i$ we have $l_1(g) = l_1(h)$, implying that either $E_i$ is trivial, or $E_i = E_i(g)$ for some $g$, meaning that $E_i$ is infinite cyclic.
\end{proof}

Now we are ready to prove the main result of the paper.

\begin{proof}[Proof of Theorem \ref{th:main_result}]
Since the assumptions of Theorem \ref{th:main_result} imply the assumptions of Proposition \ref{pr:main_technical}, it follows from Proposition \ref{pr:main_technical} that for every $k \in [1, n)$, the group $G_{k+1}$ is isomorphic to an HNN extension of $G_k$, where every associated subgroup has an abelian action on a $\Z^{k-1}$-tree. That is, we have
$$G_{k+1} = \langle G_k, \{h_i \mid i \in I\} \mid h_i^{-1} c h_i = \phi_i(c),\ c \in C_i \rangle,$$
where $C_i = G_k \cap h_i G_k h_i^{-1}$. From Corollary \ref{define elliptic} it follows that each $C_i$ contains a normal subgroup $E_i$ such that $C_i / E_i$ is a free abelian group of rank at most $k - 1$. By Lemma \ref{elliptic cyclic}, $E_i$ is either trivial, or an infinite cyclic. Hence, $C_i$ is isomorphic to either $\Z^m$ (in the case when $E_i$ is trivial), or an extension of $\Z^m$ by $\Z$ (in the case when $E_i$ is an infinite cyclic), where $m \leqslant k - 1$. In particular, each $C_i$ is finitely generated. Thus, if $G_{k+1}$ is finitely generated, then from  \cite[Proposition 1.35]{Cohen}, it follows that the set of indices $I$ is finite and $G_k$ is also finitely generated. Since $G = G_n$ is finitely generated by assumption, using induction, one easily obtains that $G_k$ is finitely generated for every $k \in [1, n)$. Finally, by Lemma \ref{G1 hyperbolic}, $G_1$ is a torsion-free word-hyperbolic group. 
\end{proof}

\begin{corollary}
Under assumptions of Theorem \ref{th:main_result}, if $n = 2$, then $G$ is relatively hyperbolic with abelian parabolics.
\end{corollary}
\begin{proof}
Since $n = 2$, we have
$$1 \leqslant G_1 \leqslant G_2 = G$$
and by Theorem \ref{th:main_result}, we obtain
$$G_2 = \langle G_1, h_1, \ldots, h_m \mid h_i^{-1} c h_i = \phi_i(c),\ c \in C_i \rangle,$$
where $C_i = G_1 \cap h_i G_1 h_i^{-1}$ and $G_1$ is a torsion-free word-hyperbolic group. It follows that each $C_i$ is an infinite cyclic. Moreover, each $C_i$ is a maximal cyclic subgroup of $G_1$. Indeed, if there exists $a \in G_1$ such that $C_i \lneq \langle a \rangle$, that is, $a \notin C_i$, then $h_i^{-1} a h_i \notin G_1$ (otherwise $a \in G_1 \cap h_i G_1 h_i^{-1} = C_i$) and $l_1(h_i^{-1} c h_i) \neq 0$. But, at the same time, $a^r \in C_i$ for some $r \in \Z$, so it follows that $h_i^{-1} a^r h_i \in G_1$ and $l_1(h_i^{-1} a^r h_i) = 0$, which is a contradiction. Finally, since $G$ is an HNN extension of a hyperbolic group with finitely many stable letters and maximal cyclic associated subgroups, the required result follows from \cite[Theorem 0.1]{Dahmani:2003}.
\end{proof}


\begin{thebibliography}{10}

\bibitem{Alperin_Bass:1987}
R.~{Alperin} and H.~{Bass}.
\newblock Length functions of group actions on {$\Lambda$}-trees.
\newblock In {\em Combinatorial group theory and topology, ed. S. M. Gersten
  and J. R. Stallings)}, volume 111 of {\em Annals of Math. Studies}, pages
  265--178. Princeton University Press, 1987.

\bibitem{Chiswell:1976}
I.~{Chiswell}.
\newblock {Abstract length functions in groups}.
\newblock {\em Math. Proc. Cambridge Philos. Soc.}, 80(3):451--463, 1976.

\bibitem{Chiswell:2001}
I.~{Chiswell}.
\newblock {\em {Introduction to $\Lambda$-trees}}.
\newblock World Scientific, 2001.

\bibitem{Cohen}
D.~E. {Cohen}.
\newblock {\em Combinatorial group theory: a topological approach}, volume~14
  of {\em London Mathematical Society Student Texts}.
\newblock Cambridge University Press, Cambridge, 1989.

\bibitem{Dahmani:2003}
F.~{Dahmani}.
\newblock {Combination of convergence groups}.
\newblock {\em Geom. Topol.}, 7:933--963, 2003.

\bibitem{Ghys_delaHarpe:1991}
E.~{Ghys} and P.~{de la Harpe}.
\newblock {\em {Espaces Metriques Hyperboliques sur les Groupes Hyperboliques
  d'apr\`{e}s Michael Gromov}}.
\newblock Birkhauser, 1991.

\bibitem{Grecianu:2013}
A.-P. {Grecianu}.
\newblock {\em {Group Actions on Non-Archimedean Hyperbolic Spaces}}.
\newblock PhD thesis, McGill University, 2013.

\bibitem{GKMS:2012}
A.-P. {Grecianu}, A.~{Kvaschuk}, A.~G. {Myasnikov}, and D.~{Serbin}.
\newblock {Groups acting on hyperbolic $\Lambda$-metric spaces}.
\newblock {\em Internat. J. Algebra Comput.}, 25(6):977--1042, 2015.

\bibitem{Khan_Myasnikov_Serbin:2007}
B.~{Khan}, A.~G. {Myasnikov}, and D.~{Serbin}.
\newblock {On positive theories of groups with regular free length functions}.
\newblock {\em Internat. J. Algebra Comput.}, 17(1):1--26, 2007.

\bibitem{KMRS:2012}
O.~{Kharlampovich}, A.~G. {Myasnikov}, V.~N. {Remeslennikov}, and D.~{Serbin}.
\newblock {Groups with free regular length functions in $\mathbb{Z}^n$}.
\newblock {\em Trans. Amer. Math. Soc.}, 364:2847--2882, 2012.

\bibitem{KMS:2011(3)}
O.~{Kharlampovich}, A.~G. {Myasnikov}, and D.~{Serbin}.
\newblock {Groups with regular free length functions in $\Lambda$}.
\newblock Available at http://arxiv.org/abs/0911.0209, 2011.

\bibitem{KMS_Survey:2013}
O.~{Kharlampovich}, A.~G. {Myasnikov}, and D.~{Serbin}.
\newblock {Actions, length functions, and non-archemedian words}.
\newblock {\em Internat. J. Algebra Comput.}, 23(2):325--455, 2013.

\bibitem{Lyndon:1963}
R.~{Lyndon}.
\newblock {Length functions in groups}.
\newblock {\em Math. Scand.}, 12:209--234, 1963.

\bibitem{Macpherson:1996}
D.~{Macpherson}.
\newblock {\em {Permutation groups whose subgroups have just finitely many
  orbits}}, volume 354 of {\em Mathematics and Its Applications}, pages
  221--229.
\newblock Springer US, 1996.

\bibitem{Morgan_Shalen:1984}
J.~{Morgan} and P.~{Shalen}.
\newblock {Valuations, trees, and degenerations of hyperbolic structures, I}.
\newblock {\em Ann. of Math.}, 120:401--476, 1984.

\bibitem{Myasnikov_Remeslennikov_Serbin:2005}
A.~G. {Myasnikov}, V.~{Remeslennikov}, and D.~{Serbin}.
\newblock {Regular free length functions on Lyndon's free $\mathbb{Z}[t]$-group
  $F^{\mathbb{Z}[t]}$}.
\newblock In {\em Algorithms, Languages, Logic}, volume 378 of {\em
  Contemporary Mathematics}, pages 37--77. American Mathematical Society, 2005.

\bibitem{Promislow:1985}
D.~{Promislow}.
\newblock {Equivalence classes of length functions on groups}.
\newblock {\em Proc. London Math. Soc.}, 51(3):449--477, 1985.

\bibitem{Wise:2011}
D.~T. {Wise}.
\newblock {\em {The structure of groups with a quasiconvex hierarchy:
  (ams-209)}}.
\newblock Annals of Mathematics Studies. Princeton University Press, 2021.

\end{thebibliography}
\end{document}